\newif\ifconfver
\newif\ifplainver  
\newcounter{algsubstate}
\newtheorem{Lemma}{Lemma}
\newtheorem{Theorem}{Theorem}
\newtheorem{Assumption}{H\!\!}
\newcommand{\pushright}[1]{\ifmeasuring@#1\else\omit\hfill$\displaystyle#1$\fi\ignorespaces}
\newcommand{\pushleft}[1]{\ifmeasuring@#1\else\omit$\displaystyle#1$\hfill\fi\ignorespaces}
\pgfplotsset{compat=1.3}
\tikzset{fontscale/.style = {font=\relsize{#1}}
    }
\definecolor{lavander}{cmyk}{0,0.48,0,0}
\definecolor{violet}{cmyk}{0.79,0.88,0,0}
\definecolor{burntorange}{cmyk}{0,0.52,1,0}
\definecolor{asuorange}{rgb}{1,0.699,0.0625}
\definecolor{asured}{rgb}{0.598,0,0.199}
\definecolor{asuborder}{rgb}{0.953,0.484,0}
\definecolor{asugrey}{rgb}{0.309,0.332,0.340}
\definecolor{asublue}{rgb}{0,0.555,0.836}
\definecolor{asugold}{rgb}{1,0.777,0.008}
\tikzstyle{server}=[draw, regular polygon, regular polygon sides=4, black!80, fill=black!40,
\tikzstyle{worker}=[draw,circle, asublue!80!white, fill = asublue!50!white,
    \def\multilimits@{\bgroup
  \Let@
  \restore@math@cr
  \default@tag
 \baselineskip\fontdimen10 \scriptfont\tw@
 \advance\baselineskip\fontdimen12 \scriptfont\tw@
 \lineskip\thr@@\fontdimen8 \scriptfont\thr@@
 \lineskiplimit\lineskip
 \vbox\bgroup\ialign\bgroup\hfil$\m@th\scriptstyle{##}$\hfil\crcr}
    \def\Sb{_\multilimits@}
    \def\endSb{\crcr\egroup\egroup\egroup}
\newtheoremstyle{t}         
    {\baselineskip}{2\topsep}      
    {\rm}                   
    {0pt}{\bfseries}  
    {}                      
    { }                      
    {\thmname{#1}\thmnumber{#2}.}
\theoremstyle{t}
\DeclareRobustCommand*\cal{\@fontswitch\relax\mathcal}
\begin{document}
\title{On the Convergence of Consensus Algorithms with Markovian Noise and Gradient Bias}
\author{Hoi-To Wai$^\dagger$\thanks{The author is with the Department of SEEM, The Chinese University of Hong Kong, Shatin, Hong Kong. This work is supported by CUHK Direct Grant \#4055113. E-mail: \url{htwai@se.cuhk.edu.hk}}}
\date{\today}

\maketitle

\begin{abstract}
This paper presents a finite time convergence analysis for a decentralized stochastic approximation (SA) scheme. The scheme generalizes several algorithms for decentralized machine learning and multi-agent reinforcement learning. Our proof technique involves separating the iterates into their respective consensual parts and consensus error. The consensus error is bounded in terms of the stationarity of the consensual part, while the updates of the consensual part can be analyzed as a perturbed SA scheme. Under the Markovian noise and time varying communication graph assumptions, the decentralized SA scheme has an expected convergence rate of ${\cal O}(\log T/ \sqrt{T} )$, where $T$ is the iteration number, in terms of squared norms of gradient for nonlinear SA with smooth but non-convex cost function. This rate is comparable to the best known performances of  SA in a centralized setting with a non-convex potential function.\vspace{-.2cm}
\end{abstract} 

\section{Introduction}\vspace{-.3cm}
Decentralized algorithm has become a core tool for control, optimization and machine learning in the increasingly connected world. Among others, a common setting is  multi-agent optimization, where a group of agents on a connected graph seek to minimize a sum of local functions using a common parameter. For convex optimization problems, consensus-based algorithms have been developed in \cite{nedic2009distributed} for deterministic optimization, and the recent results have extended the latter to stochastic/non-convex optimization \cite{bianchi2013performance,di2016next,pu2018distributed,xin2019distributed}.\vspace{-.1cm}

With new machine learning inspired-models such as the training of neural networks and  reinforcement learning, researchers have focused on developing decentralized methods for \emph{non-convex} stochastic optimization \cite{chang2020distributed}. However, methods developed so far for this scenario are limited to using \emph{unbiased} gradient estimates \cite{lian2017can, tang2018d, lu2019gnsd}, requiring i.i.d.~data samples and easy-to-compute gradients. The latter requirements may be an obstacle in deploying the methods in sophisticated problems.\vspace{-.1cm}

The focus of this paper is to relax the unbiased gradient estimate assumption of decentralized stochastic optimization methods. We consider a decentralized stochastic approximation (DSA) scheme whose local update directions are computed from samples of a Markov chain, and the update directions converge asymptotically to a non-gradient mean field, i.e., it is a biased DSA scheme. Our setting is relevant to a decentralized stochastic gradient method with ergodic data as well as multi-agent reinforcement learning.
Our contributions are:\vspace{-.2cm}
\begin{itemize}[leftmargin=3.5mm]
\item For non-convex stochastic optimization, we prove that the DSA scheme with updates from Markov samples converge to a consensual and stationary point at a rate of ${\cal O}( \log T / \sqrt{T} )$, where $T$ is the iteration number. 
\item To analyze the convergence of biased DSA, we develop a decoupling procedure to split the DSA iterates into their consensual part and consensus error, which has a similar favor to the analysis in \cite{bianchi2013performance,mathkar2016nonlinear,vlaski2019distributed}. Such separation allows us to (i) bound the consensus error explicitly; and subsequently (ii) analyze the recursion of the consensual part as a \emph{perturbed} biased SA scheme through adopting prior analysis, e.g., \cite{karimi2019non}. 
\item We show that the biased DSA scheme also converges for time varying topology under a standard bounded connectivity assumption.\vspace{-.1cm}
\end{itemize}
For biased DSA scheme with Markov samples, the closest work related to ours include \cite{bianchi2013performance} which considered a convex optimization related setting; and \cite{sun2019decentralized} which proposed an algorithm that may not achieve exact consensus. 
To the best of our knowledge, this paper provides the first finite-time convergence analysis for DSA scheme with biased updates relying on Markov samples. In addition, this work is related to the recent works on non-asymptotic analysis of SA schemes \cite{kumar2019non,chen2020explicit,doan2020finite}.

\section{Decentralized SA Scheme}\vspace{-.3cm}
Consider a network of $n$ agents which have the goal of obtaining a common (a.k.a.~consensual) solution, $\overline{\prm} \in \RR^d$, which is a stationary point to the optimization problem:\vspace{-.2cm}
\beq \label{eq:opt}  
V^\star = \min_{ \prm \in \RR^d }~V(\prm) \eqdef \frac{1}{n} \sum_{i=1}^n V_i(\prm),\vspace{-.1cm}
\eeq
where $V_i : \RR^d \rightarrow \RR$ is a smooth (but possibly non-convex) function and is lower bounded. Conceptually, the $i$th function $V_i(\prm)$ can be interpreted as the local potential/cost function held by the $i$th agent.
The agents communicate through an undirected, connected simple graph $G = (V,E)$ where $V = \{1,...,n\}$ is the node set, and $E \subseteq V \times V$ is the edge set which includes self-loops. 
The graph is endowed with a symmetric, weighted adjacency matrix ${\bm A}$. We assume
\begin{Assumption} \label{ass:graph}
The matrix ${\bm A} \in \RR_+^{n \times n}$ satisfies:\vspace{-.1cm}
\begin{enumerate}
\item $A_{ij} = 0$ whenever $(i,j) \notin E$. \vspace{-.1cm}
\item ${\bm A} {\bf 1} = {\bm A}^\top {\bf 1} = {\bf 1}$.\vspace{-.1cm}
\item $\| {\bm U}^\top {\bm A} {\bm U} \|_2 \leq 1 - \bar{\rho}$, where $\bar{\rho} \in (0,1]$ and ${\bm U}$ is a projection matrix such that ${\bm I} - \frac{1}{n} {\bf 1}{\bf 1}^\top = {\bm U} {\bm U}^\top$. \vspace{-.2cm}
\end{enumerate}
\end{Assumption}
Note that condition 3) is equivalent to requiring that $\max\{ | \lambda_2( {\bm A}) |, | \lambda_n( {\bm A} ) | \} \leq 1- \bar{\rho}$. Such a weighted adjacency matrix ${\bm A}$ exists if $G$ is connected, e.g., \cite{nedic2009distributed}. 

At iteration $t$, each agent holds a local solution $\prm_i^{(t)}$ and for simplicity, we assume the initialization $\prm_i^{(0)} = \prm_j^{(0)}$ for any $i,j$. Let $X^{t+1} \in {\sf X}$ be a random sample, where ${\sf X}$ is a (discrete or continuous) state space. The decentralized SA (DSA) scheme performs the recursion at all agents $i = 1,...,n$,\vspace{-.1cm}
\beq \label{eq:dsa}
\prm_i^{(t+1)} = \sum_{j=1}^n A_{ij} \prm_j^{(t)} - \gamma_{t+1} {\cal H}_i( \prm_i^{(t)} ; X^{t+1} ),\vspace{-.1cm}
\eeq 
where the first term corresponds to an average consensus step among the neighbors of agent $i$, and ${\cal H}_i( \prm_i^{(t)} ; X^{t+1} )$ is a local stochastic update computed from $\prm_i^{(t)}$, $X^{t+1}$. Furthermore, we denote\vspace{-.1cm}
\beq
{\cal F}_t = \sigma\{ \{ \prm_i^{(0)} \}_{i=1}^n, X^{s},~s=0,1,2,...,t \} \vspace{-.1cm}
\eeq
as the filtration of random elements  up to iteration $t$. Note that $\prm_i^{(t)}$ is measurable w.r.t.~${\cal F}_t$. In the special case of $n=1$, \eqref{eq:dsa} is reduced to the classical SA scheme \cite{robbins1951stochastic}; for general $n>1$, \eqref{eq:dsa} is related to a matrix momentum SA scheme studied in \cite{devraj2018optimal}.

We consider a \emph{biased} DSA scheme in this paper. To describe the setup,
the random samples $\{ X^{t} \}_{t \geq 0}$ forms form  a Markov chain (MC) with the kernel $\PP$ satisfying:\vspace{-.1cm}
\begin{Assumption} \label{ass:ergodic}
The Markov kernel $\PP: {\sf X} \times {\sf X} \rightarrow \RR_+$ generating $\{ X^t \}_{t \geq 0}$ has a unique stationary distribution $\mu : {\sf X} \rightarrow \RR$, and it is irreducible, aperiodic.\vspace{-.2cm}
\end{Assumption}
For any measurable function $f$ on ${\sf X}$, with a slight abuse of notation we define 
$\PP f( X^t ) = \EE[ f( X^{t+1} ) | X^t ] = \int_{\sf X} f(x) \PP( X^t, dx )$. The mean field of  ${\cal H}_i( \prm ; X )$ is 
\beq \textstyle
h_i( \prm ) := \int_{\sf X} {\cal H}_i( \prm ; x ) \, \mu ( dx) .
\eeq
Importantly, the \emph{averaged} mean field 
\beq \textstyle
\overline{h}(\prm) \eqdef \frac{1}{n} \sum_{i=1}^n h_i( \prm )
\eeq 
is related to Problem~\eqref{eq:opt} through:
\begin{Assumption} \label{ass:bias} 
For any $\prm \in \RR^d$, there exists $d_0, c_0 > 0$ such that
\beq \notag
\begin{split}
& \pscal{ \overline{h}(\prm) }{ \grd V( \prm ) } \geq c_0 \| \overline{h}(\prm) \|^2,~ d_0 \| \overline{h}( \prm ) \|^2 \geq \| \grd V( \prm ) \|^2,
\end{split}
\eeq 
where $\pscal{{\bm x}}{{\bm y}} = {\bm x}^\top {\bm y}$ denotes Euclidean inner product.\vspace{-.2cm}
\end{Assumption}
The constants $c_0,d_0$ characterize the multiplicative \emph{bias} of the mean field in view of a stationary solution to \eqref{eq:opt}. It allows the local stochastic update to be \emph{quasi-gradient}, which is relevant when the gradient of $V_i(\prm)$ is hard to obtain. 
Besides, the transient of stochastic update is biased under H\ref{ass:ergodic} with $\EE[ {\cal H}_i( \prm_i^{(t)} ; X^{t+1} ) | {\cal F}_t ] \neq h_i( \prm_i^{(t)})$ for finite $t$. To this end, we assume:
\begin{Assumption} \label{ass:poisson}
For any $i=1,...,n$, $\prm \in \RR^d$, $x \in {\sf X}$, there exists a measurable function $\widehat{\cal H}_i : \RR^d \times {\sf X} \rightarrow \RR^d$ such that
\beq
\widehat{\cal H}_i ( \prm ; x ) - \PP \widehat{\cal H}_i ( \prm ; x ) = {\cal H}_i ( \prm ; x ) - h_i( \prm ).\vspace{-.1cm}
\eeq
\end{Assumption} 
The measurable function in H\ref{ass:poisson} is a solution to the Poisson equation. Such function exists under H\ref{ass:ergodic} and additional conditions on the MC.  For example, we may assume $\PP$ is uniformly ergodic, i.e., for a constant $K$, 
\beq \textstyle \label{eq:uniferg}
\sup_{x \in {\sf X}} \| \PP^t( x, \cdot ) - \mu (\cdot) \|_{\rm TV} \leq K \lambda^{-t},~\forall~t \geq 0,
\eeq
such that $\lambda \in [0,1)$ characterizes the mixing time of $\PP$.
H\ref{ass:poisson} is also satisfied under more relaxed conditions, e.g., geometric ergodicity, see \cite[Ch.~21.2]{douc2018markov}, \cite{glynn1996liapounov}.

We assume that both the local stochastic updates and potential functions are smooth w.r.t.~$\prm$:
\begin{Assumption} \label{ass:lips_h}
For any $i$, the local stochastic update ${\cal H}_i(\prm ; x)$ is $L_h$-Lipschitz w.r.t.~$\prm$, i.e., for any $\prm, \prm' \in \RR^d$,
\beq
\sup_{x \in {\sf X}} \| {\cal H}_i(\prm ; x) - {\cal H}_i(\prm' ; x) \| \leq L_h \| \prm - \prm' \|.
\eeq
Consequently, the mean field map $h_i(\prm)$ is $L_h$-Lipschitz such that $\| h_i( \prm ) - h_i(\prm' ) \| \leq L_h \| \prm - \prm' \|$, $\forall~\prm,\prm' \in \RR^d$. 
\end{Assumption}
\begin{Assumption} \label{ass:lips_V}
The potential function $V(\prm)$ is $L_V$-smooth such that $\| \grd V( \prm ) - \grd V(\prm' ) \| \leq L_V \| \prm - \prm' \|$ for any $\prm,\prm' \in \RR^d$. 
\end{Assumption}
Lastly, we assume the following on ${\cal H}_i( \prm ; x)$:
\begin{Assumption} \label{ass:cons_bd}
For any $\prm = (\prm_1,...,\prm_n)$, there exists $\sigma_o$ such that 
\beq \label{eq:upbd_het}
\begin{split}
& \textstyle \sup_{x \in {\sf X}} \| {\cal H}_i ( \prm_i ; x ) - {\textstyle \frac{1}{n}\sum_{j=1}^n} {\cal H}_j( \prm_j ; x ) \| \\
& \leq \sigma_o \{ {\textstyle \frac{1}{n}} + {\textstyle \frac{1}{n}} \| \overline{h} ( \Tprm_c ) \| + \| \prm_i - \Tprm_c \| \},
\end{split}
\eeq 
for any $i=1,...,n$, where we have $\Tprm_c = {\frac{1}{n}\sum_{j=1}^n} \prm_j$.
\end{Assumption} 
\begin{Assumption} \label{ass:noise_bd}
For any $\Tprm_c \in \RR^d$, there exists $\sigma_h$ such that 
\beq 
\begin{split}
& \textstyle \sup_{x \in {\sf X}} \big\| {\textstyle \frac{1}{n} \sum_{i=1}^n} {\cal H}_i ( \Tprm_c ; x ) - \overline{h} ( \Tprm_c )  \big\| \leq \sigma_h.
\end{split}
\eeq  
\end{Assumption} 
In particular, $\sigma_o$ quantifies the \emph{heterogeneity} of the stochastic updates, while $\sigma_h$ plays a similar role as the variance of $(1/n) \sum_{i=1}^n \{ {\cal H}_i ( \prm ; x ) - h_i( \prm ) \}$. 
Under H\ref{ass:lips_h}, we observe H\ref{ass:cons_bd} can be satisfied if the l.h.s. of \eqref{eq:upbd_het} is upper bounded by ${\cal O}( 1 + \sum_{i=1}^n \| h_i( \prm_i ) \| )$. Our condition H\ref{ass:cons_bd} is considerably weaker than the heterogeneity assumption required by \cite{lian2017can}, as we allow the heterogeneity between the local updates to grow with the norm of mean field.

Notice that 
H\ref{ass:cons_bd}, H\ref{ass:noise_bd} are \emph{uniform bounds} on the norms of error for all $x \in {\sf X}$. They are considerably stronger than those for decentralized stochastic algorithms with i.i.d.~data, e.g., \cite{ghadimi2013stochastic,lian2017can}. However, we remark that this is a caveat for the prior works on SA with Markov noise as well, e.g., \cite{sun2018markov,sun2019decentralized,karimi2019non,srikant2019finite}. We discuss two applications.\vspace{-.1cm}


\subsection{Decentralized SGD with Ergodic Data}\vspace{-.1cm}
 In this case, the $i$th potential function is taken as the following stochastic objective function:\vspace{-.1cm}
\beq \label{eq:sgd_obj}
V_i(\prm) := \EE_{ X_i \sim \mu_i } \big[ V_i( \prm; X_i ) \big].\vspace{-.1cm}
\eeq
The local stochastic update is given by \vspace{-.1cm}
\beq \label{eq:sgd_mkv}
{\cal H}_i( \prm_i^{(t)} ; X_i^{t+1} ) = \grd V_i(  \prm_i^{(t)} ; X_i^{t+1} ), \vspace{-.1cm}
\eeq
such that $\{ X_i^t \}_{t \geq 0}$ is a Markov chain with the kernel $\PP_i: {\sf X}_i \times {\sf X}_i \rightarrow \RR_+$ and a unique stationary distribution $\mu_i : {\sf X}_i \rightarrow \RR_+$. Consequently, the mean field of ${\cal H}_i( \prm_i^{(t)} ; X_i^{t+1} )$ is the gradient $h_i(\prm_i^{(t)}) = \grd V_i(\prm_i^{(t)})$.

Eq.~\eqref{eq:sgd_obj}, \eqref{eq:sgd_mkv} generalize the vanilla decentralized SGD method \cite{lian2017can} to scenarios with non-i.i.d.~(a.k.a.~ergodic) data. As discussed in \cite{duchi2012ergodic,sun2018markov}, the latter is important to applications where data samples are not obtained independently. For example, the data samples are generated using a Markov chain Monte carlo method.  

To see that \eqref{eq:sgd_obj}, \eqref{eq:sgd_mkv} fit the assumptions of this paper, we take ${\sf X} = {\sf X}_1 \times \cdots \times {\sf X}_n$ and form $\PP$ by concatenating the local Markov kernels.
Clearly, H\ref{ass:bias} is satisfied with $c_0=d_0=1$; H\ref{ass:ergodic}, H\ref{ass:poisson} depend on the Markov chain; H\ref{ass:lips_h}, H\ref{ass:lips_V} are related to the smoothness of $V_i( \prm ; x )$ w.r.t.~$\prm$; H\ref{ass:cons_bd} can be satisfied with more homogeneous objective function; H\ref{ass:noise_bd} bounds the noise in estimating the mean field by averaging the local stochastic updates.\vspace{-.1cm} 

%
%
\subsection{Decentralized TD(0) Learning}\vspace{-.1cm} We consider the policy evaluation problem in a multi-agent reinforcement learning (MARL) setting. Our aim is to compute the value function under a policy $\pi$ for an unknown Markov decision process (MDP) using linear function approximation \cite{wai2018multi,sun2019finite,doan2019finite}. 

Consider the MDP at state $x$ with the reward of ${\rm R}(x)$. The agents only observe a local reward ${\rm R}_i(x)$ satisfying ${\rm R}(x) = \frac{1}{n} \sum_{i=1}^n {\rm R}_i(x)$. We aim at approximating the value function as $V(x) = \EE[\sum_{s=0}^\infty \gamma^s {\rm R}(x_s) | x_0 = x ]  \approx \prm^\top \Phi(x)$, where $\prm \in \RR^d$ is the function parameter and $\Phi(x)$ is a `feature' vector.
To find $\prm$, the decentralized TD(0) learning algorithm \cite{sun2019finite,doan2019finite} deploys \eqref{eq:dsa} with the following local stochastic update:
\beq \notag
{\cal H}_i( \prm_i^{(t)} ; x ) = \Phi (x) \big\{ {\rm R}_i(x) + ( \gamma  {\Phi}( x' ) - \Phi(x) )^\top \prm_i^{(t)} \big\}
\eeq
where $x' \in {\sf X}$ denotes the next state drawn from the MDP when the current state is $x$. The terms inside the curly bracket is the temporal difference error. 
We observe that the resultant algorithm is a \emph{linear DSA} scheme. 

To discuss the performance of TD(0), we take $V_i(\prm) = \frac{1}{2}\| \prm - \prm^\star \|^2$, where $\prm^\star$ solves the Bellman equation:
\beq \notag
\EE_\mu[ \Phi(x) ( \gamma  {\Phi}( x' ) - \Phi(x) )^\top ] \prm^\star = \EE_\mu [ \Phi(x) {\rm R}(x) ] .
\eeq
Most of our assumptions can be satisfied by the linear DSA. 
Using \cite[Lemma 3 \& 4]{bhandari2018finite}, H\ref{ass:bias} is satisfied with $c_0 = \frac{1-\gamma}{4}$, and we can show that $d_0 = \EE_\mu[ \| \Phi(x) (\gamma \Phi(x') - \Phi(x) )^\top \|_2 ]^2$. H\ref{ass:ergodic}, H\ref{ass:poisson} are conditions on the Markov chain; H\ref{ass:lips_h}--H\ref{ass:cons_bd} can be satisfied with a bounded $\Phi(x)$, ${\rm R}(x)$.
Lastly, H\ref{ass:noise_bd} can be relaxed in the analysis as the resultant DSA scheme is linear. In the interest of space, we leave the development of the latter case to a future work. 



\section{Finite-Time Analysis of DSA}
For general smooth cost function $V(\prm)$, we consider a random terminating time $\tau(T)$ such that 
\[ \textstyle
{\rm Pr}( \tau(T) = t ) = \gamma_{t+1} / \sum_{s=0}^T \gamma_{s+1},~t=0,...,T,
\]
where $\tau(T) \in \{0,...,T\}$ is selected independently and $T$ is the maximum number of iterations. This is a common stopping criterion proposed in \cite{ghadimi2013stochastic}. For DSA, it can be decided by the agents with a simple consensus protocol before the iterations.
Our main result is summarized as:
\begin{Theorem} \label{thm:mkv}
Under H\ref{ass:graph}--H\ref{ass:noise_bd}, suppose the step size satisfies
\beq \label{eq:step_rule}
\textstyle \sup_{t \geq 0} \gamma_t \leq \min \Big\{ 1, \frac{\bar{\rho} }{2 \sigma_o}, \frac{c_0}{2 \widetilde{C}^{\sf mk}} \Big\}
\eeq
and there exists $\hat{a}$ such that $0 \leq \gamma_t - \gamma_{t+1} \leq \hat{a} \gamma_t^2$. For any $T \geq 0$, it holds that \vspace{-.1cm}
\beq  \label{eq:mf_norm}
\EE[ \| \overline{h} ( \Tprm_c^{(\tau(T))} ) \|^2 ] \leq \frac{ {\rm C}^{\sf tot} }{(c_0/2)\sum_{t=0}^T \gamma_{t+1}},~~
\max_{i=1,...,n} \hspace{-.2cm} \EE[ \| \prm_i^{(\tau(T))} \hspace{-.1cm}  - \Tprm_c^{(\tau(T))} \|] \leq \frac{ \frac{1}{c_0} {\rm C}^{\sf tot} + \frac{3\sigma_o}{2 \bar{\rho}} \sum_{t=0}^{T} \gamma_{t+1}^2 }{ \sum_{t=0}^T \gamma_{t+1}},
\eeq
where we have defined $\Tprm_c^{(t)} := \frac{1}{n} \sum_{i=1}^n \prm_i^{(t)}$, 
\beq \textstyle \notag
{\rm C}^{\sf tot} := V( \frac{\sum_{i=1}^n \Tprm_i^{(0)}}{n} ) - V^\star +  {\rm C}_0^{\sf mk} +  \overline{\rm C}^{\sf mk}  \textstyle \sum_{t=0}^T \gamma_{t+1}^2,
\eeq
and the constants $\widetilde{C}^{\sf mk}$, ${\rm C}_0^{\sf mk}$, $\overline{\rm C}^{\sf mk}$ will be specified in Section~\ref{sec:pf}. The expectation is taken w.r.t.~the terminating iteration $\tau(T)$ and the Markovian randomness.
\end{Theorem}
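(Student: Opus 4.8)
The plan is to follow the decoupling strategy announced in the introduction: write the stacked iterate ${\bm\Theta}^{(t)} = (\prm_1^{(t)},\dots,\prm_n^{(t)})$ as its consensual part $\Tprm_c^{(t)} = \frac1n\sum_{i=1}^n\prm_i^{(t)}$ plus the consensus error $\prm_i^{(t)}-\Tprm_c^{(t)}$, control the error as a contraction driven by H\ref{ass:cons_bd}, and treat the consensual part as a perturbed biased SA recursion amenable to a descent argument. First I would average \eqref{eq:dsa} over the agents; since ${\bm A}^\top{\bf 1}={\bf 1}$ by H\ref{ass:graph}, the consensus step cancels and one gets $\Tprm_c^{(t+1)} = \Tprm_c^{(t)} - \frac{\gamma_{t+1}}{n}\sum_{i=1}^n{\cal H}_i(\prm_i^{(t)};X^{t+1})$. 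I would then split the update direction as $\overline{h}(\Tprm_c^{(t)})$ plus two corrections: a consensus term $\frac1n\sum_i[{\cal H}_i(\prm_i^{(t)};X^{t+1})-{\cal H}_i(\Tprm_c^{(t)};X^{t+1})]$, bounded by the $L_h$-Lipschitz property H\ref{ass:lips_h} in terms of the consensus error, and the Markovian fluctuation $\frac1n\sum_i[{\cal H}_i(\Tprm_c^{(t)};X^{t+1})-h_i(\Tprm_c^{(t)})]$, uniformly bounded by $\sigma_h$ via H\ref{ass:noise_bd} and handled more finely below.

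For the consensus error I would write \eqref{eq:dsa} in stacked form ${\bm\Theta}^{(t+1)} = {\bm A}{\bm\Theta}^{(t)} - \gamma_{t+1}{\bm H}^{(t)}$ with ${\bm H}^{(t)}$ stacking the ${\cal H}_i(\prm_i^{(t)};X^{t+1})$, and apply the projection ${\bm U}{\bm U}^\top = {\bm I}-\frac1n{\bf 1}{\bf 1}^\top$. Using ${\bm A}{\bf 1}={\bf 1}$ and condition 3) of H\ref{ass:graph}, the projected iterate obeys $\|{\bm U}^\top{\bm\Theta}^{(t+1)}\|_F \le (1-\bar{\rho})\|{\bm U}^\top{\bm\Theta}^{(t)}\|_F + \gamma_{t+1}\|{\bm U}^\top{\bm H}^{(t)}\|_F$, where the driving term is exactly the per-agent deviation bounded by H\ref{ass:cons_bd}. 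The crucial feature of that bound is that it is \emph{linear} both in the error itself and in $\|\overline{h}(\Tprm_c^{(t)})\|$; the step-size restriction $\gamma_t\le\bar{\rho}/(2\sigma_o)$ from \eqref{eq:step_rule} lets me absorb the self-coupling into the contraction, leaving a stable recursion with effective rate $1-\bar{\rho}/2$. Unrolling this geometric recursion from the consensual initialization (and using $0\le\gamma_t-\gamma_{t+1}\le\hat{a}\gamma_t^2$ to handle the slowly varying step sizes) bounds $\sum_{t=0}^T\gamma_{t+1}\,\EE[\max_i\|\prm_i^{(t)}-\Tprm_c^{(t)}\|]$ by a constant-driven part scaling like $\frac{\sigma_o}{\bar{\rho}}\sum_t\gamma_{t+1}^2$ plus a stationarity-driven part proportional to $\sum_t\gamma_{t+1}\|\overline{h}(\Tprm_c^{(t)})\|^2$. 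The latter is precisely what the descent analysis controls, which is how the two estimates in \eqref{eq:mf_norm} become coupled.

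Next I would run the descent analysis on $V(\Tprm_c^{(t)})$. Expanding via the $L_V$-smoothness H\ref{ass:lips_V} and invoking the bias condition H\ref{ass:bias}, the leading inner-product term yields $-\gamma_{t+1}c_0\|\overline{h}(\Tprm_c^{(t)})\|^2$, the descent I wish to accumulate, while the $\frac{L_V}{2}\|\Tprm_c^{(t+1)}-\Tprm_c^{(t)}\|^2$ term contributes an ${\cal O}(\gamma_{t+1}^2)$ remainder through H\ref{ass:cons_bd}--H\ref{ass:noise_bd}. The delicate term is the Markovian fluctuation, which is \emph{not} a martingale increment since $\EE[{\cal H}_i(\prm;X^{t+1})\mid{\cal F}_t]\ne h_i(\prm)$. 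Here I would invoke the Poisson equation H\ref{ass:poisson}, substituting ${\cal H}_i-h_i=\widehat{\cal H}_i-\PP\widehat{\cal H}_i$ and performing summation by parts: since $\PP\widehat{\cal H}_i(\prm;X^t)=\EE[\widehat{\cal H}_i(\prm;X^{t+1})\mid{\cal F}_t]$, the sum telescopes into a martingale part (vanishing in expectation), boundary terms, and two residuals --- one from the step-size variation $\gamma_t-\gamma_{t+1}$ and one from the parameter drift $\Tprm_c^{(t+1)}-\Tprm_c^{(t)}$ --- both ${\cal O}(\gamma_{t+1}^2)$ after using the Lipschitzness of $\widehat{\cal H}_i$ and \eqref{eq:step_rule}. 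Collecting the $\gamma_{t+1}^2$ coefficients into $\overline{\rm C}^{\sf mk}$ and the initial/boundary contributions into ${\rm C}_0^{\sf mk}$, and imposing $\gamma_t\le c_0/(2\widetilde{C}^{\sf mk})$ so that the $\gamma_{t+1}^2\|\overline{h}\|^2$ remainders merely halve the descent coefficient, summing over $t=0,\dots,T$ and telescoping $V$ yields $\frac{c_0}{2}\sum_{t=0}^T\gamma_{t+1}\,\EE[\|\overline{h}(\Tprm_c^{(t)})\|^2]\le{\rm C}^{\sf tot}$.

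Finally, dividing by $\frac{c_0}{2}\sum_t\gamma_{t+1}$ and using that $\EE[\|\overline{h}(\Tprm_c^{(\tau(T))})\|^2]=\sum_t\gamma_{t+1}\EE[\|\overline{h}(\Tprm_c^{(t)})\|^2]/\sum_t\gamma_{t+1}$ under the sampling law of $\tau(T)$ gives the first inequality of \eqref{eq:mf_norm}; feeding the resulting bound on $\sum_t\gamma_{t+1}\EE[\|\overline{h}(\Tprm_c^{(t)})\|^2]$ into the consensus-error estimate gives the second. I expect the main obstacle to be the joint treatment of the second and third steps: H\ref{ass:cons_bd} makes the consensus error and the stationarity $\|\overline{h}\|$ mutually dependent, so neither can be bounded in isolation, and breaking this feedback is exactly the role of the two step-size conditions in \eqref{eq:step_rule}. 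The accompanying difficulty is the Poisson-equation bookkeeping for the non-martingale Markovian term, whose parameter-drift residual itself depends on the increments one is trying to bound, requiring the estimates to be closed simultaneously rather than sequentially.
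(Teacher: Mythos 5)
Your proposal follows essentially the same route as the paper: the same decomposition into consensual part and consensus error via the projection ${\bm U}{\bm U}^\top$, the same contraction-plus-absorption argument giving the effective rate $1-\bar{\rho}/2$ (Lemma~\ref{prop:conerr}), the same descent lemma on $V(\Tprm_c^{(t)})$ with the bias condition H\ref{ass:bias} (Lemma~\ref{lem:descent}), and the same Poisson-equation/summation-by-parts treatment of the non-martingale Markovian term (Lemma~\ref{lem:markov}), closed in the same way by the two step-size conditions and the randomized stopping time. The argument is correct and matches the paper's proof in all essential respects.
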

For the best convergence rate, we may take $\gamma_t = a_0 / \sqrt{t+a_1}$ for some $a_0,a_1>0$. In this case, the theorem shows that the squared norm of mean field  and the consensus error converge at the rate of ${\cal O}( \log T / \sqrt{T} )$. By H\ref{ass:bias} and \eqref{eq:mf_norm}, we have that $\EE[\| \grd V( \Tprm_c^{(\tau(T))} ) \|^2]$ converges at ${\cal O}( \log T / \sqrt{T} )$, i.e., the DSA scheme finds a stationary point to problem \eqref{eq:opt}. Note that this is a standard rate for non-convex stochastic optimization \cite{chang2020distributed}.
Compared to existing works, our convergence rate is similar to a centralized SA scheme, e.g., \cite{karimi2019non}, and it strengthens that of \cite{sun2019decentralized} for DSA with Markov noise, as we provide a convergence rate for \emph{exact consensus}. 

As will be derived later, the constant ${\rm C}^{\sf tot}$ is proportional to ${\cal O}(\bar{\rho}^{-1})$, i.e., related to the spectral gap of the weighted adjacency matrix [cf.~H\ref{ass:graph}] and the magnitude $\sup_{x, \prm}\|\widehat{H}_i(\prm;x)\|$ in  H\ref{ass:poisson}. In the case of uniform MC, the latter is in the order of ${\cal O}(\frac{1}{1-\lambda})$ such that it is related to the mixing time of the Markov chain. Our bound also highlights on the initialization $V( {\sum_{i=1}^n \Tprm_i^{(0)}}/{n} )$.

Instead of analyzing  the convergence of the DSA scheme with a single potential function, in the analysis that follows, we adopt a divide-and-conquer approach similar to \cite{bianchi2013performance,mathkar2016nonlinear,vlaski2019distributed}, where we first decompose the DSA iterate $\prm_1^{(t)}, ..., \prm_n^{(t)}$ into its consensual part and consensus error.
By observing the individual update formulas, we bound the consensus error \emph{separately} as the latter depends on stationarity of the averaged iterate. Subsequently, the consensual part can be analyzed using similar technique as a centralized SA scheme. 
Due to space limitation, we only provide the analysis for general nonlinear DSA under H\ref{ass:graph}--H\ref{ass:noise_bd}. 
\vspace{-.1cm}

\subsection{Proof of Theorem~\ref{thm:mkv}}\label{sec:pf}\vspace{-.1cm}
Define the following $nd$-dimensional vectors\vspace{-.1cm}
\[
\prm^{(t)} := \left( \begin{array}{c}
\prm_1^{(t)} \\ \vdots \\ \prm_n^{(t)}
\end{array} \right),~
{\cal H} (\prm^{(t)}; x) := \left( \begin{array}{c}
{\cal H}_1 (\prm_1^{(t)}; x) \\ \vdots \\ {\cal H}_n (\prm_n^{(t)}; x)
\end{array} \right)\vspace{-.1cm}
\]
as the collection of local solutions and stochastic updates, respectively.  
We rewrite the DSA recursion \eqref{eq:dsa} as:\vspace{-.1cm}
\beq \label{eq:dsa_v}
\prm^{(t+1)} = \big( {\bm A} \otimes {\bm I}_d \big) \prm^{(t)} - \gamma_{t+1} {\cal H} (\prm^{(t)}; X^{t+1}),\vspace{-.1cm}
\eeq
where $\otimes$ denotes the Kronecker product. 

Consider the projection matrix ${\bm I}_n - {\textstyle \frac{1}{n}}{\bm 1} {\bf 1}^\top$ onto the subspace orthogonal to ${\rm span} \{ {\bf 1}_n \}$. As ${\rm rank}( {\bm I}_n - \frac{1}{n} {\bf 1}{\bf 1}^\top) = n-1$, it admits the factorization ${\bm I} - \frac{1}{n} {\bf 1}{\bf 1}^\top = {\bm U} {\bm U}^\top$, where ${\bm U}$ satisfies ${\bm U}^\top {\bm U} = {\bm I}_{n-1}$. 
We let \vspace{-.1cm}
\beq \label{eq:tprm}
\Tprm_c^{(t)} \eqdef ( {\textstyle \frac{1}{n}{\bf 1}^\top } \otimes {\bm I}_d ) \prm^{(t)},~
\Tprm_o^{(t)} \eqdef ( {\bm U}^\top \otimes {\bm I}_d ) \prm^{(t)},\vspace{-.1cm}
\eeq
such that $\Tprm_c^{(t)} \in \RR^d$, $\Tprm_o^{(t)} \in \RR^{(n-1)d}$ denote the consensual component, and the consensus error of $\prm^{(t)}$, respectively. Moreover,
\beq \label{eq:prm_d} 
\prm^{(t)} = ( {\bf 1} \otimes {\bm I}_d ) \Tprm_c^{(t)} + ( {\bm U} \otimes {\bm I}_d ) \Tprm_o^{(t)} .
\eeq
Using \eqref{eq:dsa_v}, \eqref{eq:tprm}, the recursions of the two components in \eqref{eq:prm_d} can be described as\vspace{-.1cm}
\beq \label{eq:decompose}
\begin{split}
\Tprm_c^{(t+1)} & \overset{(a)}{=} \Tprm_c^{(t)} - \gamma_{t+1} ( {\textstyle \frac{1}{n}{\bf 1}^\top } \otimes {\bm I}_d ) {\cal H} (\prm^{(t)}; X^{t+1}) , \\[.1cm]
\Tprm_o^{(t+1)} &  
\overset{(b)}{=} ( {\bm U}^\top {\bm A} {\bm U} \otimes {\bm I}_d ) \Tprm_o^{(t)} \\
& \quad - \gamma_{t+1} ( {\bm U}^\top \otimes {\bm I}_d ) {\cal H} (\prm^{(t)}; X^{t+1}) ,\vspace{-.1cm}
\end{split}
\eeq
%
where (a) used   ${\bf 1}^\top {\bm A} = {\bf 1}^\top$; (b) used  ${\bm U}^\top {\bm A} {\bf 1} = {\bm U}^\top {\bf 1} = {\bm 0}$. 
The recursions in \eqref{eq:decompose} are coupled through the local solutions $\prm^{(t)}$ in the stacked stochastic update. In particular, they allow us to handle the convergence of the respective components as two SA schemes.

Our next step is to derive an intermediate bound on the consensus error $\Tprm_o^{(t+1)}$. A key observation is as follows:
\begin{Lemma} \label{prop:conerr}
Assume H\ref{ass:graph}, H\ref{ass:cons_bd} and the step size satisfies $\gamma_t \leq \frac{\bar{\rho}}{2 \sigma_o}$. If $\prm_i^{(0)} = \prm_j^{(0)}$ for all $i,j$, then it holds for any $t \geq 0$ that\vspace{-.1cm}
\beq \notag 
\begin{split}
& \| \Tprm_o^{(t+1)} \|  \leq \sigma_o \, \sum_{s=0}^{t} \gamma_{s+1} \big(1 - {\textstyle \frac{\bar\rho}{2}} \big)^{t-s} \big\{ 1 + \| \overline{h}( \Tprm_c^{(s)} ) \| \big\}.\vspace{-.1cm}
\end{split}
\eeq
\end{Lemma}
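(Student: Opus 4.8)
The plan is to convert the linear recursion for the consensus error in the second line of \eqref{eq:decompose} into a scalar contraction for $\|\Tprm_o^{(t)}\|$ and then unroll it. First I would record the base case: since $\prm_i^{(0)} = \prm_j^{(0)}$ for all $i,j$, the stacked vector satisfies $\prm^{(0)} = ({\bf 1}\otimes{\bm I}_d)\Tprm_c^{(0)}$, hence $\Tprm_o^{(0)} = ({\bm U}^\top\otimes{\bm I}_d)\prm^{(0)} = {\bm 0}$ because ${\bm U}^\top{\bf 1}={\bm 0}$. Next, applying the triangle inequality to the $\Tprm_o$-recursion in \eqref{eq:decompose} and using condition 3) of H\ref{ass:graph} together with $\|({\bm U}^\top{\bm A}{\bm U})\otimes{\bm I}_d\|_2 = \|{\bm U}^\top{\bm A}{\bm U}\|_2 \le 1-\bar\rho$, I obtain the one-step bound
\[
\|\Tprm_o^{(t+1)}\| \le (1-\bar\rho)\,\|\Tprm_o^{(t)}\| + \gamma_{t+1}\,\big\| ({\bm U}^\top\otimes{\bm I}_d)\,{\cal H}(\prm^{(t)};X^{t+1})\big\|.
\]

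The crux is bounding the stochastic term through H\ref{ass:cons_bd}. Here I would exploit the identity ${\bm U}{\bm U}^\top = {\bm I}_n - \frac1n{\bf 1}{\bf 1}^\top$, which gives
\[
\big\| ({\bm U}^\top\otimes{\bm I}_d)\,{\cal H}\big\|^2 = {\cal H}^\top\big(({\bm I}_n-{\textstyle\frac1n}{\bf 1}{\bf 1}^\top)\otimes{\bm I}_d\big){\cal H} = \sum_{i=1}^n \big\| {\cal H}_i(\prm_i^{(t)};X^{t+1}) - {\textstyle\frac1n}{\textstyle\sum_{j=1}^n}{\cal H}_j(\prm_j^{(t)};X^{t+1})\big\|^2,
\]
i.e. the projected update is exactly the root of the sum of the per-agent heterogeneities appearing on the left of \eqref{eq:upbd_het}. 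Applying H\ref{ass:cons_bd} termwise and Minkowski's inequality, together with the decomposition \eqref{eq:prm_d} which yields $\sum_{i=1}^n\|\prm_i^{(t)}-\Tprm_c^{(t)}\|^2 = \|({\bm U}\otimes{\bm I}_d)\Tprm_o^{(t)}\|^2 = \|\Tprm_o^{(t)}\|^2$, I can bound the stochastic term by $\sigma_o(1+\|\overline{h}(\Tprm_c^{(t)})\|) + \sigma_o\|\Tprm_o^{(t)}\|$ (crudely using $1/\sqrt n \le 1$ on the constant part). Substituting back produces the scalar recursion
\[
\|\Tprm_o^{(t+1)}\| \le (1-\bar\rho+\gamma_{t+1}\sigma_o)\,\|\Tprm_o^{(t)}\| + \gamma_{t+1}\sigma_o\,\big(1+\|\overline{h}(\Tprm_c^{(t)})\|\big).
\]

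Finally, the step size condition $\gamma_{t+1}\le\bar\rho/(2\sigma_o)$ makes the contraction factor at most $1-\bar\rho/2$, and unrolling the resulting linear recursion from $\Tprm_o^{(0)}={\bm 0}$ delivers the claimed geometric-sum bound. The main obstacle is the second step: one must recognize that H\ref{ass:cons_bd}, which is stated per agent and involves the \emph{local} discrepancies $\|\prm_i^{(t)}-\Tprm_c^{(t)}\|$, can be re-expressed through the $\ell_2$ identity above so that those discrepancies collapse into the single quantity $\|\Tprm_o^{(t)}\|$. The delicate point is then that the extra $\sigma_o\|\Tprm_o^{(t)}\|$ so generated is absorbed into the contraction factor rather than treated as an exogenous driving term — which is precisely what the step size restriction is designed to permit.
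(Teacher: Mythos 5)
Your proposal is correct and follows essentially the same route as the paper: a one-step contraction via condition 3) of H\ref{ass:graph}, a bound on the projected stochastic update via H\ref{ass:cons_bd} that collapses the local discrepancies into $\|\Tprm_o^{(t)}\|$, absorption of the resulting $\gamma_{t+1}\sigma_o\|\Tprm_o^{(t)}\|$ term into the contraction factor using $\gamma_{t+1}\le\bar\rho/(2\sigma_o)$, and unrolling from $\Tprm_o^{(0)}={\bm 0}$. Your use of the exact $\ell_2$ identity $\|({\bm U}^\top\otimes{\bm I}_d){\cal H}\|^2=\sum_i\|{\cal H}_i-\frac1n\sum_j{\cal H}_j\|^2$ together with Minkowski is in fact a slightly tighter bookkeeping than the paper's blockwise triangle inequality, but it is the same argument.
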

The above lemma shows that the consensus error can be upper bounded by the convolution between an exponential term $(1 - {\textstyle \frac{\bar\rho}{2}})^{t-s}$ and the norm of mean field weighted by the step size as $\gamma_{s+1} \{ 1 + \| \overline{h}( \Tprm_c^{(s)} ) \| \}$.
Importantly, $\| \Tprm_o^{(t)}\|$ decays to zero at the rate of ${\cal O}(\gamma_{t+1})$ provided that $\| \overline{h}( \Tprm_c^{(t)} ) \| \rightarrow 0$. In fact, the above lemma provides a quantitative bound which allows us to decouple consensus error term from the SA error at every iteration.

Next, we focus on the convergence of the consensual component $\Tprm_c^{(t)}$. We observe that
\beq  
\begin{split}
& ( {\textstyle \frac{1}{n}{\bf 1}^\top } \otimes {\bm I}_d ) {\cal H} (\prm^{(t)}; X^{t+1}) \\
& = \overline{h} ( \Tprm_c^{(t)} ) + {\frac{1}{n} \sum_{i=1}^n} {\cal H}_i( \Tprm_c^{(t)}; X^{t+1} ) - \overline{h} (\Tprm_c^{(t)}) + 
{\frac{1}{n} \sum_{i=1}^n} \{ {\cal H}_i( \prm_i^{(t)}; X^{t+1} ) - {\cal H}_i( \Tprm_c^{(t)}; X^{t+1} ) \},
\end{split}
\eeq
where $h( \prm^{(t)} ) := ( h_1(\prm_1^{(t)}); \cdots; h_n(\prm_n^{(t)}) )$. Denote
\[
\begin{split}
{\bm e}_0^{(t)} & \eqdef {\textstyle \frac{1}{n} \sum_{i=1}^n} {\cal H}_i( \Tprm_c^{(t)}; X^{t+1} ) - \overline{h} (\Tprm_c^{(t)}) \\ 
{\bm e}_1^{(t)} & \eqdef {\textstyle \frac{1}{n} \sum_{i=1}^n} \{ {\cal H}_i( \prm_i^{(t)}; X^{t+1} ) - {\cal H}_i( \Tprm_c^{(t)}; X^{t+1} ) \}
\end{split}
\]
Therefore, the recursion of the consensual component $\Tprm_c^{(t)}$ follows that of a perturbed SA scheme:\vspace{-.1cm}
\beq \label{eq:psa}
\Tprm_c^{(t+1)} = \Tprm_c^{(t)}  - \gamma_{t+1} \big\{ \overline{h}( \Tprm_c^{(t)} ) + {\bm e}_0^{(t)} + {\bm e}_1^{(t)} \big\},\vspace{-.1cm}
\eeq
where ${\bm e}_0^{(t)}$ is a perturbation due to the random sample $X^{t+1}$ in estimating the mean field, and ${\bm e}_1^{(t)}$ is bounded by the consensus error.

Our idea is to proceed in a similar fashion as in \cite{karimi2019non}. 
Observe the following lemma:
\begin{Lemma} \label{lem:descent}
Under H\ref{ass:graph}, H\ref{ass:bias}, H\ref{ass:lips_h}, H\ref{ass:lips_V}, H\ref{ass:cons_bd}, H\ref{ass:noise_bd} and assume that $\gamma_t \leq \min\{ \frac{\bar{\rho}}{2 \sigma_o} , 1 \}$. For any $T \geq 0$ and let ${\rm E}_0: = {12 \sigma_o^2 L_h^2} / ( {\bar{\rho} n^2} )$, it holds
\beq \notag
{\begin{split}
& \textstyle \sum_{t=0}^T \gamma_{t+1} \Big( c_0 - \gamma_{t+1} \big\{ {\rm E}_0 + \frac{d_0}{2} + L_V \big\} \Big) \| \overline{h}(\Tprm_c^{(t)}) \|^2 \\[.1cm]
& \textstyle \leq V( \Tprm_c^{(0)} ) - V^\star + \big\{ \sigma_h^2 L_V + {\rm E}_0 \big\} \sum_{t=0}^T \gamma_{t+1}^2 - \sum_{t=0}^T  \gamma_{t+1} \pscal{ \grd V( \Tprm_c^{(t)} ) }{  {\bm e}_0^{(t)}  } .
\end{split}}
\eeq
\end{Lemma}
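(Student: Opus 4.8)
The plan is to turn the perturbed recursion \eqref{eq:psa} into a one-step descent inequality for $V$ and then sum over $t=0,\dots,T$. Since $V$ is $L_V$-smooth [H\ref{ass:lips_V}], the standard descent lemma applied along $\Tprm_c^{(t+1)}-\Tprm_c^{(t)} = -\gamma_{t+1}\{\overline{h}(\Tprm_c^{(t)}) + {\bm e}_0^{(t)} + {\bm e}_1^{(t)}\}$ gives
\[
V(\Tprm_c^{(t+1)}) \le V(\Tprm_c^{(t)}) - \gamma_{t+1}\pscal{\grd V(\Tprm_c^{(t)})}{\overline{h}(\Tprm_c^{(t)}) + {\bm e}_0^{(t)} + {\bm e}_1^{(t)}} + \tfrac{L_V}{2}\gamma_{t+1}^2 \big\| \overline{h}(\Tprm_c^{(t)}) + {\bm e}_0^{(t)} + {\bm e}_1^{(t)} \big\|^2 .
\]
I would split the inner product into three pieces. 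For the leading one, the first part of H\ref{ass:bias} yields $\pscal{\grd V}{\overline{h}} \ge c_0\|\overline{h}\|^2$, producing the $-c_0\gamma_{t+1}\|\overline{h}(\Tprm_c^{(t)})\|^2$ descent. The piece involving ${\bm e}_0^{(t)}$ I would keep verbatim on the right-hand side, since ${\bm e}_0^{(t)}$ carries the Markovian bias (nonzero conditional mean) and can only be controlled through the Poisson-equation decomposition H\ref{ass:poisson}; that control is deferred to the proof of Theorem~\ref{thm:mkv}.

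Next I would dispose of the remaining ${\bm e}_1$ cross term and the quadratic term. Applying Young's inequality with parameter proportional to $\gamma_{t+1}$ to $-\gamma_{t+1}\pscal{\grd V}{{\bm e}_1}$, and then invoking the second part of H\ref{ass:bias}, $\|\grd V\|^2 \le d_0\|\overline{h}\|^2$, converts it into a $\tfrac{d_0}{2}\gamma_{t+1}^2\|\overline{h}\|^2$ contribution plus a residual $\tfrac12\|{\bm e}_1^{(t)}\|^2$. For the quadratic term, triangle/Young inequalities together with the uniform noise bound $\|{\bm e}_0^{(t)}\|\le\sigma_h$ [H\ref{ass:noise_bd}] split off a $\sigma_h^2 L_V\gamma_{t+1}^2$ term together with an $O(L_V\gamma_{t+1}^2\|\overline{h}\|^2)$ term, which is the source of the $L_V$ inside the coefficient. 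The key reduction is that H\ref{ass:lips_h} plus Cauchy--Schwarz controls the residual ${\bm e}_1$ by the consensus error, $\|{\bm e}_1^{(t)}\| \le \tfrac{L_h}{\sqrt{n}}\|\Tprm_o^{(t)}\|$, so that everything ${\bm e}_1$-related is ultimately expressed through $\|\Tprm_o^{(t)}\|$.

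Summing over $t$, the $V$-differences telescope to $V(\Tprm_c^{(0)}) - V(\Tprm_c^{(T+1)}) \le V(\Tprm_c^{(0)}) - V^\star$ (using lower-boundedness of $V$), which yields the first two terms on the right-hand side. The crux is then to bound $\sum_{t=0}^T \|{\bm e}_1^{(t)}\|^2 \le \tfrac{L_h^2}{n}\sum_{t=0}^T\|\Tprm_o^{(t)}\|^2$ by inserting the convolution estimate of Lemma~\ref{prop:conerr}. I would square the convolution, apply Cauchy--Schwarz against the geometric weights $(1-\tfrac{\bar\rho}{2})^{t-1-s}$, and interchange the order of the double sum $\sum_t\sum_{s<t}$; the inner geometric tail collapses through $\sum_t(1-\tfrac{\bar\rho}{2})^{t-s}\le 2/\bar\rho$, which is exactly where the spectral-gap factor $\bar\rho^{-1}$ in $\mathrm{E}_0 \propto \sigma_o^2 L_h^2/\bar\rho$ originates. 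After the swap the consensus contribution reassembles as $\mathrm{E}_0\sum_s\gamma_{s+1}^2\{1 + \|\overline{h}(\Tprm_c^{(s)})\|^2\}$: the constant part feeds the $\mathrm{E}_0\sum\gamma^2$ term on the right, while the $\|\overline{h}\|^2$ part is moved back to the left, enlarging the coefficient by $\gamma_{t+1}\mathrm{E}_0$.

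I expect this last double-summation step to be the main obstacle. The delicate point is ensuring that the mean-field norms emerging from Lemma~\ref{prop:conerr}'s convolution come out weighted by $\gamma_{s+1}^2$, so that they are genuinely absorbable into the factor $c_0 - \gamma_{t+1}\{\mathrm{E}_0 + \tfrac{d_0}{2} + L_V\}$ rather than competing at order $\gamma_{t+1}$ with the $c_0$ descent, and that the geometric summation contributes only a single power of $\bar\rho^{-1}$. The step-size condition $\gamma_t \le \bar\rho/(2\sigma_o)$ is precisely what makes Lemma~\ref{prop:conerr} applicable, while $\gamma_t \le 1$ is used to fold the higher-order $\gamma^2$ residuals of the quadratic term into lower-order terms.
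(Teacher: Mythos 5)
Your proposal follows essentially the same route as the paper's proof: the $L_V$-descent lemma applied to the perturbed recursion \eqref{eq:psa}, H\ref{ass:bias} for the $c_0$ descent and for the Young step on the ${\bm e}_1^{(t)}$ cross term, the uniform bounds $\|{\bm e}_0^{(t)}\|\le\sigma_h$ and $\|{\bm e}_1^{(t)}\|\lesssim L_h\|\Tprm_o^{(t)}\|$, telescoping, and finally squaring the convolution bound of Lemma~\ref{prop:conerr} and swapping the order of summation to absorb $\sum_t\|\Tprm_o^{(t)}\|^2$ into ${\rm E}_0$ (the paper packages that last step as the auxiliary Lemma~\ref{lem:sum}). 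The argument is correct and matches the paper's proof.
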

From the above, we observe that by setting a sufficiently small $\gamma_{t+1}$, it is possible to lower bound the l.h.s.~by $\sum_{t=0}^T \gamma_{t+1} \| \overline{h} (\Tprm_c^{(t)}) \|^2$. Now if the r.h.s.~is finite, the convergence of $\EE[ \| \overline{h} (\Tprm_c^{(\tau(T))}) \|^2 ]$ can be guaranteed.

Our remaining task is to upper bound the inner product $|\EE[ \sum_{t=0}^T  \gamma_{t+1} \pscal{ \grd V( \Tprm_c^{(t)} ) }{  {\bm e}_0^{(t)} }]|$. 
Notice that in the case when $X^{t+1}$ are drawn i.i.d.~from the distribution $\mu$, this inner product is zero. 
Our results below shows that despite that $X^{t+1}$ are not i.i.d., the inner product can still be controlled with an appropriate step size.
\begin{Lemma} \label{lem:markov}
Under H\ref{ass:bias}--H\ref{ass:noise_bd}. Let $|\gamma_t - \gamma_{t+1}| \leq \hat{a} \gamma_t^2$ for some constant $\hat{a}$, and the step sizes satisfies $\gamma_t \leq \min\{ 1, \frac{\bar{\rho}}{2 \sigma_o} \}$. For any $T \geq 0$, it holds 
\beq \notag
\begin{split}
& \textstyle \big| \EE \big[ \sum_{t=0}^T \gamma_{t+1} \pscal{ \grd V( \Tprm_c^{(t)} ) }{ {\bm e}_0^{(t)} } \big] \big|  \leq {\rm C}_0^{\sf mk} + {\rm C}_1^{\sf mk} \sum_{t=0}^T \gamma_{t+1}^2 + {\rm C}_2^{\sf mk} \sum_{t=0}^T \gamma_{t+1}^2 \| \overline{h} (\Tprm_c^{(t)} ) \|^2.
\end{split}
\eeq
Here, ${\rm C}_0^{\sf mk}$, ${\rm C}_1^{\sf mk}$, ${\rm C}_2^{\sf mk}$ are technical and the constants will be defined in \eqref{eq:cmk}.
\end{Lemma}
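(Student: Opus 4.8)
The plan is to control the Markovian perturbation $\sum_{t=0}^T \gamma_{t+1}\pscal{\grd V(\Tprm_c^{(t)})}{{\bm e}_0^{(t)}}$ through the Poisson equation of H\ref{ass:poisson}, which rewrites the biased noise as the sum of a martingale difference, a Lipschitz perturbation of order ${\cal O}(\gamma_t)$, and a telescoping remainder. Set $\widehat{H}(\prm;x) := \frac{1}{n}\sum_{i=1}^n \widehat{\cal H}_i(\prm;x)$, so that H\ref{ass:poisson} gives ${\bm e}_0^{(t)} = \widehat{H}(\Tprm_c^{(t)};X^{t+1}) - \PP\widehat{H}(\Tprm_c^{(t)};X^{t+1})$, where $\PP\widehat{H}(\Tprm_c^{(t)};X^{t+1}) = \EE[\widehat{H}(\Tprm_c^{(t)};X^{t+2})\mid{\cal F}_{t+1}]$ because $\Tprm_c^{(t)}$ is ${\cal F}_t$-measurable. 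Introducing the scalar $\psi(\prm,x) := \pscal{\grd V(\prm)}{\widehat{H}(\prm;x)}$ and pulling $\Tprm_c^{(t)}$ inside the conditional expectation, the per-step term becomes $\pscal{\grd V(\Tprm_c^{(t)})}{{\bm e}_0^{(t)}} = \psi(\Tprm_c^{(t)},X^{t+1}) - \PP\psi(\Tprm_c^{(t)},X^{t+1})$, reducing the analysis to a scalar telescoping argument.

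Abbreviating $g_t := \psi(\Tprm_c^{(t)},X^{t+1})$ and $\PP g_s := \EE[\psi(\Tprm_c^{(s)},X^{s+2})\mid{\cal F}_{s+1}]$, I would split each summand as $g_t - \PP g_t = (g_t - \PP g_{t-1}) + (\PP g_{t-1} - \PP g_t)$. The first bracket splits further into the martingale difference $\psi(\Tprm_c^{(t-1)},X^{t+1}) - \EE[\psi(\Tprm_c^{(t-1)},X^{t+1})\mid{\cal F}_t]$, whose $\gamma_{t+1}$-weighted sum is a martingale with zero mean (hence vanishes after taking $\EE$), and the one-step increment $\psi(\Tprm_c^{(t)},X^{t+1}) - \psi(\Tprm_c^{(t-1)},X^{t+1})$. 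The latter is bounded by the Lipschitz modulus of $\psi$ in $\prm$ times $\|\Tprm_c^{(t)}-\Tprm_c^{(t-1)}\|$, and by \eqref{eq:psa} this displacement equals $\gamma_t\|\overline{h}(\Tprm_c^{(t-1)}) + {\bm e}_0^{(t-1)} + {\bm e}_1^{(t-1)}\|$. Using $\|{\bm e}_0^{(t-1)}\|\le\sigma_h$ from H\ref{ass:noise_bd}, the control of $\|{\bm e}_1^{(t-1)}\|$ by the consensus error via H\ref{ass:lips_h}, and $\|\grd V\|\le\sqrt{d_0}\,\|\overline{h}\|$ from H\ref{ass:bias}, these increments contribute only ${\cal O}(\gamma_{t+1}\gamma_t)$ terms, which fall into the $\sum_t\gamma_{t+1}^2$ and $\sum_t\gamma_{t+1}^2\|\overline{h}(\Tprm_c^{(t)})\|^2$ budgets after a Young's inequality.

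For the telescoping bracket $\PP g_{t-1}-\PP g_t$, I would apply Abel summation to $\sum_{t=0}^T \gamma_{t+1}(\PP g_{t-1}-\PP g_t)$, leaving the two boundary terms $\gamma_1\PP g_{-1}$ and $\gamma_{T+1}\PP g_T$ together with $\sum_t(\gamma_{t+2}-\gamma_{t+1})\PP g_t$. The step-size regularity $|\gamma_t-\gamma_{t+1}|\le\hat{a}\gamma_t^2$ turns the last sum into an ${\cal O}(\hat{a}\sum_t\gamma_{t+1}^2|\PP g_t|)$ contribution, and the pointwise bound $|\PP g_t|\le\sqrt{d_0}\,(\sup_{x,\prm}\|\widehat{H}(\prm;x)\|)\,\|\overline{h}(\Tprm_c^{(t)})\|$, which combines H\ref{ass:bias} with the uniform bound on the Poisson solution, converts it via $\gamma^2\|\overline{h}\|\le\frac12\gamma^2(1+\|\overline{h}\|^2)$ into the advertised $\sum_t\gamma_{t+1}^2$ and $\sum_t\gamma_{t+1}^2\|\overline{h}(\Tprm_c^{(t)})\|^2$ terms. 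The boundary terms are absorbed into ${\rm C}_0^{\sf mk}$, with $\gamma_{T+1}\|\overline{h}(\Tprm_c^{(T)})\|$ split by Young's inequality into a constant and a single $\gamma_{T+1}^2\|\overline{h}\|^2$ summand. Collecting everything and reading off the coefficients of $1$, $\sum_t\gamma_{t+1}^2$, and $\sum_t\gamma_{t+1}^2\|\overline{h}(\Tprm_c^{(t)})\|^2$ produces ${\rm C}_0^{\sf mk}$, ${\rm C}_1^{\sf mk}$, ${\rm C}_2^{\sf mk}$ as in \eqref{eq:cmk}.

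The main obstacle is twofold. First, one must justify that the averaged Poisson solution $\widehat{H}$ inherits both a uniform bound and a Lipschitz constant in $\prm$ from H\ref{ass:poisson} and H\ref{ass:lips_h} (e.g.\ under the uniform ergodicity \eqref{eq:uniferg}, so that these constants scale like ${\cal O}((1-\lambda)^{-1})$); the Lipschitz continuity of $\psi$ needed in the perturbation step rests entirely on this. Second, and more delicate, is confining every error term to the three allowed budgets: the displacement $\|\Tprm_c^{(t)}-\Tprm_c^{(t-1)}\|$ drags in the consensus error ${\bm e}_1^{(t-1)}$, whose bound from Lemma~\ref{prop:conerr} is itself a step-size-weighted convolution of past $\|\overline{h}(\Tprm_c^{(s)})\|$, so one must verify that after multiplication by $\gamma_{t+1}\gamma_t$ and summation these cross terms remain ${\cal O}(\sum_t\gamma_{t+1}^2)$ and generate nothing growing faster than $\gamma_{t+1}^2\|\overline{h}(\Tprm_c^{(t)})\|^2$.
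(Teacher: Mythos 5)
Your proposal follows essentially the same route as the paper's proof: the Poisson-equation rewriting of ${\bm e}_0^{(t)}$ followed by the four-way split into a zero-mean martingale difference, a Lipschitz perturbation of order $\|\Tprm_c^{(t)}-\Tprm_c^{(t-1)}\|={\cal O}(\gamma_t)$, an Abel/telescoping sum with its boundary terms, and a step-size-variation remainder controlled by $|\gamma_{t+1}-\gamma_{t+2}|\leq \hat{a}\gamma_{t+1}^2$ is exactly the paper's decomposition into $A_1^i,\dots,A_4^i$, and the error bookkeeping (Young's inequality, $\|{\bm e}_0^{(t)}\|\leq\sigma_h$, the consensus-error convolution from Lemma~\ref{prop:conerr}) matches as well. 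The only difference is cosmetic: you telescope the scalar $\psi(\prm,x)=\pscal{\grd V(\prm)}{\widehat{H}(\prm;x)}$, which folds the variation of $\grd V(\Tprm_c^{(t)})$ into the Lipschitz-increment term rather than into the paper's $A_4^i$.
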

The above lemma gives a compatible bound of the desired inner product under the scenario of Markovian noise.

Substituting Lemma~\ref{lem:markov} into the conclusion of Lemma~\ref{lem:descent} and rearranging terms yield
\beq \notag
\begin{split}
& \textstyle \sum_{t=0}^T {\textstyle \gamma_{t+1} \Big( c_0 - \gamma_{t+1} \widetilde{\rm C}^{\sf mk} \Big)} \| \overline{h}(\Tprm_c^{(t)}) \|^2 \leq V( \Tprm_c^{(0)} ) - V^\star +  {\rm C}_0^{\sf mk} +  \overline{\rm C}_{\sf mk}  \textstyle \sum_{t=0}^T \gamma_{t+1}^2 =: {\rm C}^{\sf tot}.
\end{split}
\eeq
where $\widetilde{\rm C}^{\sf mk} := {\rm C}_2^{\sf mk} + {\rm E}_0 + \frac{d_0}{2} + L_V$, $\overline{\rm C}^{\sf mk} := {\rm C}_1^{\sf mk} + {\rm E}_0 + \sigma_h^2 L_V$. We also denote the quantity in the r.h.s~by ${\rm C}^{\sf tot}$. If we select the step size according to \eqref{eq:step_rule}, then
\beq \notag 
\begin{split}
\EE[ \| \overline{h}( \Tprm_c^{(\tau(T))} ) \|^2 ] & \textstyle = \sum_{t=0}^T \gamma_{t+1} \| \overline{h}(\Tprm_c^{(t)}) \|^2 / \sum_{t=0}^T \gamma_{t+1} \leq ( (c_0/2) \sum_{t=0}^T \gamma_{t+1} )^{-1} C^{\sf tot}.
\end{split}
\eeq
As such, it concludes our result for the convergence of mean field. Furthermore, note this implies that the norm of gradient of $V(\prm)$ converges [cf.~H\ref{ass:bias}].

Finally, we bound the consensus error. Again, we invoke  Lemma~\ref{prop:conerr} and observe the following
\[
\begin{split}
\textstyle \sum_{t=0}^T \gamma_{t+1} \| \Tprm_o^{(t)} \| & \textstyle \leq \sigma_o \sum_{t=0}^T \gamma_{t+1} \sum_{s=0}^{t-1} \gamma_{s+1} (1 - \frac{\bar{\rho}}{2} )^{t-s} \{ 1 + \| \overline{h}( \Tprm_c^{(s)} ) \| \} \\
& \textstyle \overset{(a)}{\leq} \sigma_o \sum_{s=0}^{T-1} \gamma_{s+1}^2 \{ 1 + \| \overline{h}( \Tprm_c^{(s)} ) \| \}  \sum_{t=s+1}^T (1-\frac{\bar{\rho}}{2})^{t-s} \\
& \textstyle \overset{(b)}{\leq} (3\sigma_o/ 2\bar{\rho}) \sum_{s=0}^{T-1} \gamma_{s+1}^2
+ \sum_{s=0}^{T-1} \frac{\gamma_{s+1}}{2} \| \overline{h}( \Tprm_c^{(s)} ) \|^2.
\end{split}
\]
where (a) involved a change of order in summation and $\gamma_{t+1} \leq \gamma_{s+1}$ as the step size is nonincreasing; (b) involved the condition $\gamma_{s+1} \leq \bar{\rho} / (2\sigma_o)$.
Finally, evaluating the expectation shows that
\beq
\EE[ \| \Tprm_o^{(\tau(T))} \| ] \leq \frac{(3\sigma_o/ 2\bar{\rho}) \sum_{s=0}^{T-1} \gamma_{s+1}^2 + \frac{1}{c_0} {\rm C}^{\sf tot}}{\sum_{t=0}^T \gamma_{t+1}}.
\eeq
The above concludes the proof of Theorem~\ref{thm:mkv}.


\paragraph{Extension to Time-varying Graph}
Our analysis can be extended to scenarios when the communication graph is time varying. Let $G^{(t)} = (V, E^{(t)})$ be a simple, undirected graph which is potentially not connected, where $E^{(t)} \subseteq E$, and the graph is associated with a weighted adjacency matrix ${\bm A}^{(t)}$. 
We replace ${\bm A}$ by ${\bm A}^{(t)}$ in the DSA scheme \eqref{eq:dsa} at iteration $t$, and H\ref{ass:graph} is updated with the following assumption
\begin{Assumption} \label{ass:graph_tv}
For any $t \geq 0$, the matrix ${\bm A}^{(t)} \in \RR^{n \times n}$ satisfies:
\begin{enumerate}
\item $A_{ij}^{(t)} = 0$ whenever $(i,j) \notin E^{(t)}$.\vspace{-.1cm}
\item ${\bm A}^{(t)} {\bf 1} = ({\bm A}^{(t)})^\top {\bf 1} = {\bf 1}$.\vspace{-.1cm}
\item $\exists B \geq 0$ with $\| {\bm U}^\top {\bm A}^{(t+B-1)} \cdots {\bm A}^{(t)} {\bm U} \|_2 \leq 1 - \bar{\rho}$, where $\bar{\rho} \in (0,1]$. 
\end{enumerate}\vspace{-.2cm}
\end{Assumption}
The last condition can be guaranteed under the `bounded communication' setting \cite{nedic2009distributed}, i.e., when the combined graph $(V, E^{(t)} \cup \cdots E^{(t+B)})$ is connected for any $t \geq 0$. 

As ${\bm A}^{(t)}$ remains doubly stochastic, the decomposition in \eqref{eq:decompose} is still valid. We can then extend Lemma~\ref{prop:conerr} to bound the consensus error using a blocking argument; see the discussion in Appendix~\ref{app:conerr}. The proof for Theorem~\ref{thm:mkv} can be modified accordingly and we obtain the same convergence rate for the time varying graph setting.\vspace{-.1cm}

\section{Conclusions}\vspace{-.1cm}
In this paper, we have studied the convergence of a biased decentralized stochastic approximation (DSA) scheme. The scheme is a multi-agent optimization algorithm relying on biased, stochastic updates that approximate the gradient of a smooth cost function. Here, the biasednesses stem from taking Markov samples and quasi-gradients in the updates. We prove that DSA finds a consensual and stationary point to the cost function at a rate of ${\cal O}(\log T/ \sqrt{T})$, where $T$ is the maximum iteration number. Future works include extending to asynchronous, gradient tracking DSA, state-controlled Markov chain, etc..

\ifplainver
\appendix
\else
\appendices
\fi

\section{Proof of Lemma~\ref{prop:conerr} \& Its Extension}\label{app:conerr}
From the recursion \eqref{eq:decompose}, we observe that
\beq \label{eq:recur_pf_o}
\begin{split}
\| \Tprm_o^{(t+1)} \| & \leq \| ( {\bm U}^\top {\bm A}  {\bm U} \otimes {\bm I} ) \Tprm_o^{(t)}  \|  + \gamma_{t+1} \|  ({\bm U}^\top \otimes {\bm I}) {\cal H}( \prm^{(t)} ; X^{t+1} ) \| .
\end{split}
\eeq
Using H\ref{ass:graph}, we observe the contraction
\beq
\begin{split}
& \| ( {\bm U}^\top {\bm A} {\bm U} \otimes {\bm I} ) \Tprm_o^{(t)}  \|  \leq \| ( {\bm U}^\top {\bm A}  {\bm U} \otimes {\bm I} ) \|_2 \| \Tprm_o^{(t)}  \|  \leq (1 - \bar{\rho}) \| \Tprm_o^{(t)}  \|.
\end{split}
\eeq
Using H\ref{ass:cons_bd}, we bound the second term in \eqref{eq:recur_pf_o} as:
\beq \notag
\begin{split}
\|  ({\bm U}^\top \otimes {\bm I}) {\cal H}( \prm^{(t)} ; X^{t+1} ) \| & \textstyle \leq \|  {\cal H}( \prm^{(t)} ; X^{t+1} ) - ( \frac{1}{n}{\bf 1}{\bf 1}^\top \otimes {\bm I}_d )  {\cal H}( \prm^{(t)} ; X^{t+1} ) \| \\[.1cm]
& \leq \textstyle \sigma_o \sum_{i=1}^n \{ {\textstyle \frac{1}{n}} + {\textstyle \frac{1}{n}} \| \overline{h} ( \Tprm_c^{(t)} ) \|] + \| \prm_i^{(t)} - \Tprm_c^{(t)} \|  \},
\end{split}
\eeq
which can be further simplified as $\sigma_o \big\{ 1 + \| \overline{h} ( \Tprm_c^{(t)} ) \| + \| \Tprm_o^{(t)} \| \big\}$.
Substituting into \eqref{eq:recur_pf_o} yields
\beq  \label{eq:recur_finn}
\begin{split}
 \| \Tprm_o^{(t+1)} \| & \leq (1 - \bar{\rho} + \gamma_{t+1} \sigma_o )  \| \Tprm_o^{(t)} \|  + \gamma_{t+1} \sigma_o \{1 +  \| \overline{h} ( \Tprm_c^{(t)} ) \| \}.
\end{split}
\eeq
Setting $\gamma_{t+1} \leq \frac{ \bar{\rho} }{2 \sigma_o } $ yields that $1 - \bar{\rho} + \gamma_{t+1} \sigma_o \leq 1 - \frac{\bar{\rho}}{2}$. Solving the recursion and noticing that $\Tprm_o^{(0)} = {\bm 0}$ yield the desired bound.

\paragraph{Extension to Time-varying Topology} Under the relaxed condition H\ref{ass:graph_tv}, we apply a blocking argument to derive the result as in Lemma~\ref{prop:conerr}. In particular, denote $\Prm(m,n) := \| \Tprm_o^{(m)} \| + \cdots \| \Tprm_o^{(n)} \|$, we can show:
\[
\begin{split}
& \Prm( t+1, t+B ) \leq (1 - \bar{\rho}) \Prm( t-B+1, t ) \\
& \textstyle+ \sigma_o \gamma_{t-B+2} \big\{ \Prm(t-B+1,t) + \cdots + \Prm(t,t+B-1) \big\} + \sigma_o B  \sum_{s=t-B+1}^{t+B-1} \gamma_{s+1} \big\{ 1 + \| \overline{h}( \Tprm_c^{(s)} ) \| \big\},
\end{split}
\]
which implies that
\[
\begin{split}
& \Prm( t+1, t+B ) \leq \frac{ 1 - \bar{\rho} +  \sigma_o B \gamma_{t-B+2} }{ 1 - \sigma_o B \gamma_{t-B+2}  } \Prm( t-B+1, t ) + \frac{\sigma_o B}{ 1- \sigma_o B \gamma_{t-B+2}}  \sum_{s=t-B+1}^{t+B-1} \gamma_{s+1} \big\{ 1 + \| \overline{h}( \Tprm_c^{(s)} ) \| \big\}.
\end{split}
\]
Setting a sufficiently small step size $\gamma_t$ allows us to derive a similar recursion as \eqref{eq:recur_finn} for $\Prm( t+1, t+B )$. Solving it yields a convolution bound as in Lemma~\ref{prop:conerr}.  

\section{Proof of Lemma~\ref{lem:descent}}
Using the $L_V$-smoothness of $V(\cdot)$ [cf.~H\ref{ass:lips_V}], we observe
\beq \notag
\begin{split}
V( \Tprm_c^{(t+1)} ) & \leq V( \Tprm_c^{(t)} ) + \gamma_{t+1}^2 \frac{L_V}{2} \| \overline{h}( \Tprm_c^{(t)} ) + {\bm e}_0^{(t)} + {\bm e}_1^{(t)}\|^2  - \gamma_{t+1} \pscal{ \grd V( \Tprm_c^{(t)} ) }{ \overline{h}( \Tprm_c^{(t)} ) + {\bm e}_0^{(t)} + {\bm e}_1^{(t)} } \\[.1cm]
& \hspace{-1.6cm} \leq V( \Tprm_c^{(t)} ) - \gamma_{t+1} \big( c_0 - \gamma_{t+1} L_V \big) \| \overline{h}(\Tprm_c^{(t)}) \|^2 + \gamma_{t+1}^2 L_V \|  {\bm e}_0^{(t)} + {\bm e}_1^{(t)}\|^2 - \gamma_{t+1} \pscal{ \grd V( \Tprm_c^{(t)} ) }{  {\bm e}_0^{(t)} + {\bm e}_1^{(t)} }
\end{split}
\eeq
Summing up the above from $t=0$ to $t=T$ yields
\beq \notag
\begin{split}
& \textstyle \sum_{t=0}^T \gamma_{t+1} \Big( c_0 - \gamma_{t+1} L_V \Big) \| \overline{h}(\Tprm_c^{(t)}) \|^2 \\
& \textstyle\leq V( \Tprm_c^{(0)} ) - V^\star + 2 L_V \sum_{t=0}^T \gamma_{t+1}^2 \big\{ \| {\bm e}_0^{(t)}\|^2 + \| {\bm e}_1^{(t)} \|^2 \big\} - \sum_{t=0}^T  \gamma_{t+1} \pscal{ \grd V( \Tprm_c^{(t)} ) }{  {\bm e}_0^{(t)} + {\bm e}_1^{(t)} } 
\end{split}
\eeq
By H\ref{ass:lips_h}, we observe
\beq
\begin{split}
& \| {\bm e}_1^{(t)} \| \leq (L_h/n) \, \| \prm^{(t)} - ({\bf 1} \otimes {\bm I}_d) \Tprm_c^{(t)} \| = (L_h/n) \, \| ( {\bm U} \otimes {\bm I}_d ) \Tprm_o^{(t)} \| \leq (L_h/n) \| \Tprm_o^{(t)} \|
\end{split}
\eeq
Also, applying H\ref{ass:noise_bd} and re-arranging terms show that
\beq \notag
\begin{split}
&  \sum_{t=0}^T \gamma_{t+1} \Big( c_0 - \gamma_{t+1} L_V \Big) \| \overline{h}(\Tprm_c^{(t)}) \|^2 \\
& \textstyle \leq V( \Tprm_c^{(0)} ) - V^\star - \sum_{t=0}^T  \gamma_{t+1} \pscal{ \grd V( \Tprm_c^{(t)} ) }{  {\bm e}_0^{(t)} + {\bm e}_1^{(t)} }  + 2 L_V \sum_{t=0}^T \gamma_{t+1}^2 \big\{ \sigma_h^2 + \frac{L_h^2}{n^2} \| \Tprm_o^{(t)} \|^2 \big\} 
\end{split}
\eeq
Moreover, using H\ref{ass:bias} we observe 
\[
\gamma_{t+1} \pscal{ \grd V( \Tprm_c^{(t)} ) }{ {\bm e}_1^{(t)} } \leq \frac{\gamma_{t+1}^2 d_0}{2} \| \overline{h}( \Tprm_c^{(t)} ) \|^2 + \frac{1}{2} \| {\bm e}_1^{(t)} \|^2 
\]
Re-arranging terms again and using $\gamma_t \leq 1$ show
\beq \notag
\begin{split}
& \textstyle \sum_{t=0}^T \gamma_{t+1} \Big( c_0 - \gamma_{t+1} \big\{ \frac{d_0}{2} + L_V \big\} \Big) \| \overline{h}(\Tprm_c^{(t)}) \|^2 \\
& \textstyle \leq V( \Tprm_c^{(0)} ) - V^\star + \sigma_h^2 L_V \sum_{t=0}^T \gamma_{t+1}^2  + \sum_{t=0}^T  \frac{3 L_h^2}{n^2} \| \Tprm_o^{(t)} \|^2 - \sum_{t=0}^T  \gamma_{t+1} \pscal{ \grd V( \Tprm_c^{(t)} ) }{  {\bm e}_0^{(t)}  } 
\end{split}
\eeq
Next, we need to upper bound $\sum_{t=0}^T  \| \Tprm_o^{(t)} \|^2 $ with  Lemma~\ref{prop:conerr} and \ref{lem:sum}, we obtain
\beq \label{eq:Tprmo}
\begin{split}
&  \sum_{t=0}^T \| \Tprm_o^{(t)} \|^2 \leq \frac{4 \sigma_o^2 }{\bar{\rho}} \sum_{t=0}^T \gamma_{t+1}^2 \big\{ 1 + \| \overline{h}( \Tprm_c^{(t)} ) \|^2 \big\}.
\end{split}
\eeq
Define the constant ${\rm E}_0 \eqdef {12 \sigma_o^2 L_h^2} / ( {\bar{\rho} n^2} )$
and substituting into the previous inequality, we obtain
\beq \notag
{\begin{split}
& \textstyle \sum_{t=0}^T \gamma_{t+1} \Big( c_0 - \gamma_{t+1} \big\{ {\rm E}_0 + \frac{d_0}{2} + L_V \big\} \Big) \| \overline{h}(\Tprm_c^{(t)}) \|^2 \\
& \textstyle \leq V( \Tprm_c^{(0)} ) - V^\star + \big\{ \sigma_h^2 L_V + {\rm E}_0 \big\} \sum_{t=0}^T \gamma_{t+1}^2 - \sum_{t=0}^T  \gamma_{t+1} \pscal{ \grd V( \Tprm_c^{(t)} ) }{  {\bm e}_0^{(t)}  } 
\end{split}}
\eeq
This is the desirable bound for the lemma.

\section{Proof of Lemma~\ref{lem:markov}}
We begin the proof by using the solution to Poisson equation defined in H\ref{ass:poisson}. We have
\beq \notag
\begin{split}
 {\bm e}_0^{(t)} & = {\textstyle \frac{1}{n} \sum_{i=1}^n} \big\{ {\cal H}_i( \Tprm_c^{(t)}; X^{t+1} ) - {h}_i (\Tprm_c^{(t)}) \big\}  = {\textstyle \frac{1}{n} \sum_{i=1}^n} \big\{ \hat{\cal H}_i( \Tprm_c^{(t)}; X^{t+1} ) - \PP \hat{H}_i (\Tprm_c^{(t)} ; X^{t+1} ) \big\}.
\end{split}
\eeq
The above allows us to derive the decomposition
\beq \notag
\begin{split}
& \sum_{t=0}^T \gamma_{t+1} \pscal{ \grd V( \Tprm_c^{(t)} ) }{ {\bm e}_0^{(t)} }  \equiv {\frac{1}{n} \sum_{i=1}^n} \{A_1^i + A_2^i + A_3^i + A_4^i \},
\end{split}
\eeq
where
\beq \notag
{\small \begin{split}
A_1^i & \eqdef \sum_{t=0}^T \gamma_{t+1} \pscal{ \grd V( \Tprm_c^{(t)} ) }{ \hat{\cal H}_i( \Tprm_c^{(t)} ; X^{t+1} ) - \PP \hat{\cal H}_i( \Tprm_c^{(t)} ; X^{t} ) }, \\
A_2^i & \eqdef \sum_{t=0}^T \gamma_{t+1} \pscal{ \grd V( \Tprm_c^{(t)}) } { \PP \hat{\cal H}_i( \Tprm_c^{(t)} ; X^{t} ) - \PP \hat{\cal H}_i( \Tprm_c^{(t-1)} ; X^{t} ) }, \\
A_3^i & \eqdef \sum_{t=0}^T \gamma_{t+1} \pscal{ \grd V( \Tprm_c^{(t)}) } { \PP \hat{\cal H}_i( \Tprm_c^{(t-1)} ; X^{t} ) } - \sum_{t=0}^{T-1} \gamma_{t+2} \pscal{ \grd V( \Tprm_c^{(t+1)}) } { \PP \hat{\cal H}_i( \Tprm_c^{(t)} ; X^{t+1} ) }, \\
A_4^i & \eqdef \sum_{t=0}^{T-1}  \pscal{ \gamma_{t+2} \grd V( \Tprm_c^{(t+1)}) -  \gamma_{t+1} \grd V( \Tprm_c^{(t)}) } { \PP \hat{\cal H}_i( \Tprm_c^{(t)} ; X^{t+1} ) }  - \gamma_{T+1} \pscal{ \grd V( \Tprm_c^{(T)} ) }{ \PP \hat{\cal H}_i( \Tprm_c^{(T)} ; X^{T+1} ) }.
\end{split}}
\eeq
We have set $\prm^{(0)} = \prm^{(-1)}$ as a convention in the above.
Next, we upper bound the above terms as follows. 

Firstly, due to the Martingale property with $\EE[ \pscal{ \grd V( \Tprm_c^{(t)} ) }{ \hat{\cal H}_i( \Tprm_c^{(t)} ; X^{t+1} ) - \PP \hat{\cal H}_i( \Tprm_c^{(t)} ; X^{t} ) } | {\cal F}_t ] = 0$, we have 
\beq
\EE[ {\textstyle \frac{1}{n} \sum_{i=1}^n} A_1^i ] = 0,~\forall~i.
\eeq
Secondly, note that H\ref{ass:lips_h} implies that $\PP \hat{\cal H}_i(\prm; x)$ is $\bar{L}_h$-Lipschitz w.r.t.~$\prm$, for some constant $\bar{L}_h$ \cite{fort2011convergence}. As such, 
\beq
\begin{split}
A_2^i & \leq \sum_{t=0}^T \gamma_{t+1} \bar{L}_h \| \grd V( \Tprm_c^{(t)} ) \| \| \Tprm_c^{(t)} - \Tprm_c^{(t-1)} \| \leq \sum_{t=0}^T \gamma_{t+1} {d_0}^{\frac{1}{2}} \bar{L}_h \| \overline{h}( \Tprm_c^{(t)} ) \| \| \Tprm_c^{(t)} - \Tprm_c^{(t-1)} \|.
\end{split}
\eeq 
Taking the summation over $i$ and dividing by $n$ yield
\beq \notag
\frac{1}{n} {\sum_{i=1}^n} A_2^i \leq  \sum_{t=0}^T \gamma_{t+1} {d_0}^{\frac{1}{2}} \bar{L}_h \| \overline{h}( \Tprm_c^{(t)} ) \| \| \Tprm_c^{(t)} - \Tprm_c^{(t-1)} \|.
\eeq
Notice that 
\beq \notag
\begin{split}
\Tprm_c^{(t)} - \Tprm_c^{(t-1)} & = - \gamma_t \big\{ \overline{h}( \Tprm_c^{(t-1)} ) + {\bm e}_0^{(t-1)} + {\bm e}_1^{(t-1)} \big\}.
\end{split}
\eeq
We observe that 
\beq 
\| {\bm e}_0^{(t-1)} \| 
\leq \sigma_h,~~ \| {\bm e}_1^{(t-1)} \| \leq (L_h/n) \| \Tprm_o^{(t-1)} \|.
\eeq
As such,
\beq 
\begin{split} 
 \frac{1}{{d_0}^{\frac{1}{2}} \bar{L}_h}  \frac{1}{n} \sum_{i=1}^n A_2^i & \leq  \frac{ L_h}{n} \sum_{t=0}^T \gamma_{t+1} \gamma_t \| \overline{h}( \Tprm_c^{(t)} ) \| \| \Tprm_o^{(t-1)} \| + \sum_{t=0}^T \gamma_{t+1} \gamma_t \| \overline{h}( \Tprm_c^{(t)} ) \| \big\{ \sigma_h + \| \overline{h}( \Tprm_c^{(t-1)} ) \| \} \\[.1cm]
& \leq \big( \frac{ L_h}{2n} + 2 \big) \sum_{t=0}^T \gamma_{t+1}^2\| \overline{h}( \Tprm_c^{(t)} ) \|^2 +\sum_{t=0}^T \| \Tprm_o^{(t-1)} \|^2 +  \sigma_h^2 \sum_{t=0}^T \gamma_{t}^2 \\[.1cm]
& \leq \big( 2 + \frac{L_h}{2n} + \frac{4 \sigma_o^2}{ \bar{\rho} } \big) \sum_{t=0}^T \gamma_{t+1}^2\| \overline{h}( \Tprm_c^{(t)} ) \|^2 + \big( \sigma_h^2 + \frac{4 \sigma_o^2}{ \bar{\rho} } \big) \sum_{t=0}^T \gamma_{t+1}^2.
\end{split}
\eeq
To analyze the last two terms $A_3^i$, $A_4^i$, we denote 
\beq
\bm{\mathcal{E}} = {\textstyle \frac{1}{n}} {\bf 1}^\top \otimes {\bm I}_d,~~\prm_c^{(t)} := ({\bf 1} \otimes {\bm I}_d) \Tprm_c^{(t)},
\eeq 
such that $\PP \hat{\cal H}( \prm_c ; x ) = ( \PP \hat{\cal H}_1( \Tprm_c ; x ); \cdots ; \PP \hat{\cal H}_n( \Tprm_c ; x ) )$.

Thirdly, we observe that from \cite[Lemma 4.2]{fort2011convergence}, under H\ref{ass:ergodic}, H\ref{ass:poisson}, H\ref{ass:noise_bd}, it can be shown for any $\prm_c = ({\bf 1} \otimes {\bm I}_d) \Tprm_c$, with $\Tprm_c \in \RR^d$, and $x \in {\sf X}$ that:
\beq
\| \PP \hat{\cal H}( \prm_c ; x ) \| \leq K_{\PP}.
\eeq
Here, $K_{\PP}$ depends on the mixing time of the Markov chain, e.g., it is proportional to $\frac{1}{1-\lambda}$ under the uniform ergodicity condition \eqref{eq:uniferg}.
Therefore, 
\beq 
\begin{split} \textstyle
\frac{1}{n} \sum_{i=1}^n A_3^i & = \gamma_1 \pscal{ \grd V( \Tprm_c^{(0)} ) }{ \bm{\mathcal{E}}  \PP \hat{\cal H}( \prm_c^{(0)} ; X^0) } \\
& \leq \gamma_1 K_{\PP} \| \grd V( \Tprm_c^{(0)} ) \| . 
\end{split}
\eeq
Fourthly, using   $|\gamma_{t+2} - \gamma_{t+1}| \leq \hat{a} \gamma_{t+1}^2$, we have 
\beq \notag
{\small
\begin{split}
\frac{1}{n} \sum_{i=1}^n A_4^i  & \leq \hat{a} d_0^{\frac{1}{2}} \sum_{t=0}^{T-1} \gamma_{t+1}^2 \| \overline{h}( \Tprm_c^{(t+1)}) \| \| \bm{\mathcal{E}}  \PP \hat{\cal H} ( \prm_c^{(t)}; X^{t+1} ) \| + L_V \sum_{t=0}^{T-1} \gamma_{t+1}  \| \Tprm_c^{(t+1)} - \Tprm_c^{(t)} \| \| \bm{\mathcal{E}}  \PP \hat{\cal H} ( \prm_c^{(t)}; X^{t+1} ) \| \\
& \quad + \gamma_{T+1} \| \grd V( \Tprm_c^{(T)} ) \| \| \bm{\mathcal{E}} \PP \hat{\cal H}_i( \prm_c^{(T)} ; X^{T+1} )  \| \\[.1cm]
& \leq K_{\PP} \sum_{t=0}^{T-1} \big\{ \hat{a} d_0^{\frac{1}{2}} \gamma_{t+1}^2 \| \overline{h}( \Tprm_c^{(t+1)}) \|  + L_V \gamma_{t+1}  \| \Tprm_c^{(t+1)} - \Tprm_c^{(t)} \| \big\}  + \gamma_{T+1} K_{\PP} \| \grd V( \Tprm_c^{(T)} ) \| .
\end{split}}
\eeq
To bound $\frac{1}{n} \sum_{i=1}^n A_4^i$, we observe that
\beq \notag
\begin{split}
& \textstyle \sum_{t=0}^{T-1} \gamma_{t+1}^2 \| \overline{h}( \Tprm_c^{(t+1)}) \|  \leq \sum_{t=0}^{T-1} \gamma_{t+1}^2 \big\{ 1 + \| \overline{h}( \Tprm_c^{(t+1)}) \|^2 \big\},
\end{split}
\eeq
the latter can be further simplified as
\[ \textstyle
\sum_{t=0}^{T-1} \gamma_{t+1}^2 \| \overline{h}( \Tprm_c^{(t+1)}) \|^2 \leq a^2 \sum_{t=0}^T \gamma_{t+1}^2 \| \overline{h}( \Tprm_c^{(t)}) \|^2. \]
Moreover,
\beq \notag
{\small \begin{split}
& \sum_{t=0}^{T-1} \gamma_{t+1}  \| \Tprm_c^{(t+1)} - \Tprm_c^{(t)} \|  = \sum_{t=0}^{T-1} \gamma_{t+1}^2 \| \overline{h}( \Tprm_c^{(t)} ) + {\bm e}_0^{(t)} + {\bm e}_1^{(t)} \| \\
& \leq \sum_{t=0}^{T-1} \gamma_{t+1}^2 \big\{ \sigma_h + \| \overline{h}( \Tprm_c^{(t)} ) \| + {\textstyle \frac{L_h}{n}} \| \Tprm_o^{(t)} \| \} \\
& \leq \sum_{t=0}^{T-1} \gamma_{t+1}^2 \big\{ \sigma_h + \frac{1 + \frac{L_h}{n}}{2} + {\textstyle \frac{1}{2}} \| \overline{h}( \Tprm_c^{(t)} ) \|^2 + {\textstyle \frac{L_h}{2 n}} \| \Tprm_o^{(t)} \|^2 \} \\
& \leq \sum_{t=0}^{T-1} \gamma_{t+1}^2 \big\{ {\textstyle \frac{\bar{\rho} n (1 + 2 \sigma_h) + L_h + 4 L_h \sigma_o^2}{2 \bar{\rho} n} } + {\textstyle \frac{\bar{\rho} n + 4 L_h \sigma_o^2}{2 \bar{\rho} n}}  \| \overline{h}( \Tprm_c^{(t)} ) \|^2 \} .
\end{split}}
\eeq
We observe the crude upper bound
\beq 
\begin{split}
& \gamma_{T+1} \| \grd V( \Tprm_c^{(T)} ) \| \leq d_0^{\frac{1}{2}} \, \gamma_{T+1}  \| \overline{h} ( \Tprm_c^{(T)} ) \| \\
& \leq \frac{ d_0^{\frac{1}{2}} }{2} \big( 1 + \gamma_{T+1}^2 \| \overline{h} ( \Tprm_c^{(T)} ) \|^2 \big) \leq \frac{ d_0^{\frac{1}{2}} }{2} + \frac{ d_0^{\frac{1}{2}} }{2} \sum_{t=0}^T \gamma_{t+1}^2 \| \overline{h} ( \Tprm_c^{(t)} ) \|^2.
\end{split}
\eeq
Define the constants
\beq \label{eq:cmk}
\begin{split}
{\rm C}_0^{\sf mk} & := K_{\PP} \Big\{ \frac{ d_0^{\frac{1}{2}} }{2} + \gamma_1 \| \grd V( \Tprm_c^{(0)} ) \| \Big\}, \\
{\rm C}_1^{\sf mk} & := K_{\PP}  L_V \frac{\bar{\rho} n (1 + 2 \sigma_h) + L_h (1 + 4\sigma_o^2)}{2 \bar{\rho} n} + d_0^{\frac{1}{2}} \big( \bar{L}_h \sigma_h^2 + \bar{L}_h \frac{4 \sigma_o^2}{ \bar{\rho} } + K_{\PP} \hat{a} \big), \\
{\rm C}_2^{\sf mk} & := d_0^{\frac{1}{2}} \bar{L}_h \big( 2 + \frac{L_h}{2n} + \frac{4 \sigma_o^2}{ \bar{\rho} } \big) + K_{\PP} \frac{d_0^\frac{1}{2}}{2} + K_{\PP} \Big( \hat{a} a^2 d_0^{\frac{1}{2}} + L_V \frac{\bar{\rho} n + 4 L_h \sigma_o^2}{2 \bar{\rho} n} \Big).
\end{split}
\eeq
Combining the terms yields
\beq \notag
\begin{split}
& \textstyle \EE \big[ \big| \sum_{t=0}^T \gamma_{t+1} \pscal{ \grd V( \Tprm_c^{(t)} ) }{ {\bm e}_0^{(t)} } \big| \big] \leq {\rm C}_0^{\sf mk} + {\rm C}_1^{\sf mk} \sum_{t=0}^T \gamma_{t+1}^2 + {\rm C}_2^{\sf mk} \sum_{t=0}^T \gamma_{t+1}^2 \| \overline{h} (\Tprm_c^{(t)} ) \|^2.
\end{split}
\eeq
This is the desired result for the lemma.

\section{Auxiliary Lemma}
\begin{Lemma} \label{lem:sum}
Let $\{ a_s \}_{s \geq 0}$ be an arbitrary sequence of non-negative number and $\rho \in (0,1)$ be a constant. For any $T \geq 0$, we have
\beq \label{eq:sum_lemma}
\sum_{t=0}^T \left( \sum_{s=0}^t a_s (1-\rho)^{t-s} \right)^2 
\leq \frac{2}{\rho} \sum_{t=0}^T a_t^2.
\eeq
\end{Lemma}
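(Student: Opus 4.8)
The plan is to read the inner sum $b_t := \sum_{s=0}^t a_s (1-\rho)^{t-s}$ as a discrete convolution of the nonnegative sequence $\{a_s\}$ against the geometric kernel $j \mapsto (1-\rho)^j$, and to estimate $\sum_{t=0}^T b_t^2$ by Cauchy--Schwarz followed by an interchange of the order of summation. The only analytic facts required are that the geometric series $\sum_{j=0}^{\infty}(1-\rho)^j = 1/\rho$ and all its truncations are bounded by $1/\rho$, which hold because $1-\rho \in (0,1)$; no convexity or sign structure beyond $a_s \ge 0$ is needed.

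First I would bound each term $b_t^2$ separately. Splitting the kernel symmetrically as $(1-\rho)^{t-s} = (1-\rho)^{(t-s)/2}(1-\rho)^{(t-s)/2}$ and applying Cauchy--Schwarz to the two factors gives
\[
b_t^2 \;\le\; \Big(\sum_{s=0}^t (1-\rho)^{t-s}\Big)\Big(\sum_{s=0}^t a_s^2\,(1-\rho)^{t-s}\Big) \;\le\; \frac{1}{\rho}\sum_{s=0}^t a_s^2\,(1-\rho)^{t-s},
\]
where the last step uses $\sum_{s=0}^t (1-\rho)^{t-s} = \sum_{j=0}^t (1-\rho)^j \le 1/\rho$. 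Next I would sum this over $t = 0,\dots,T$ and swap the order of the double summation, writing $\sum_{t=0}^T\sum_{s=0}^t(\cdot) = \sum_{s=0}^T a_s^2 \sum_{t=s}^T (1-\rho)^{t-s}$, and bound the inner geometric sum once more by $\sum_{j=0}^{T-s}(1-\rho)^j \le 1/\rho$. This collapses everything to a multiple of $\sum_{s=0}^T a_s^2$, which is the desired right-hand side, and the argument is uniform in $T$ (all sums are finite), covering the edge case $T=0$ trivially.

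An equivalent and slightly shorter route, which I might prefer, uses the first-order recursion $b_t = (1-\rho)b_{t-1} + a_t$ with the convention $b_{-1}=0$. Applying the weighted Young inequality $2(1-\rho)b_{t-1}a_t \le \rho(1-\rho)b_{t-1}^2 + \rho^{-1}(1-\rho)a_t^2$ to the cross term in $b_t^2 = (1-\rho)^2 b_{t-1}^2 + 2(1-\rho)b_{t-1}a_t + a_t^2$ yields the one-step estimate $b_t^2 \le (1-\rho)b_{t-1}^2 + \rho^{-1}a_t^2$; summing this over $t$ and using $b_{-1}=0$ telescopes into $\rho\sum_{t=0}^T b_t^2 \le \rho^{-1}\sum_{t=0}^T a_t^2$, i.e.\ the same conclusion.

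The one point that needs care --- and the step I would treat as the main obstacle --- is the accounting of the constant. Each of the two geometric summations (or, in the recursion route, the single telescoped factor) contributes a factor $\rho^{-1}$, so the clean estimate naturally lands at the prefactor $\rho^{-2}\sum_{t=0}^T a_t^2$. This $\rho^{-2}$ is in fact the sharp $\ell^2\!\to\!\ell^2$ Young constant for the geometric kernel (it is essentially attained by the constant sequence $a_s \equiv 1$, for which $b_t \to 1/\rho$), so it is precisely at this prefactor that any off-by-a-factor bookkeeping must be checked; matching it against the constant appearing in the statement is the sole delicate point, since everything else is routine.
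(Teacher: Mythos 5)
Your argument is correct, and it takes a genuinely different route from the paper. The paper expands the square into the triple sum $\sum_{t=0}^T\sum_{s=0}^t\sum_{q=0}^t a_s a_q(1-\rho)^{2t-s-q}$, exchanges the order of summation, and symmetrizes over the cases $q\le s$ and $s\le q$, bounding each of the two resulting sums by $\rho^{-2}\sum_s a_s^2$; the symmetrization double-counts the diagonal and costs the factor $2$, so the paper's proof terminates at $\tfrac{2}{\rho^2}\sum_{t}a_t^2$. Your Cauchy--Schwarz-plus-Fubini argument (and equally your recursion $b_t=(1-\rho)b_{t-1}+a_t$ with Young's inequality) avoids the symmetrization entirely and yields the sharper constant $\tfrac{1}{\rho^2}\sum_t a_t^2$ in two lines. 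The reservation you flag about the prefactor is exactly right and is not mere bookkeeping: the constant $\tfrac{2}{\rho}$ printed in the lemma is not attainable. As you note, $a_s\equiv 1$ gives $b_t\to 1/\rho$, so the left-hand side grows like $T/\rho^2$ while the claimed right-hand side is $2T/\rho$, and the displayed inequality fails for every $\rho<1/2$ once $T$ is large --- precisely the regime $\rho=\bar\rho/2\le 1/2$ in which the lemma is invoked. Since the paper's own proof also ends at $\tfrac{2}{\rho^2}\sum_t a_t^2$, the exponent in the statement is a typo rather than a defect in your argument; the only downstream consequence is that the bound \eqref{eq:Tprmo} and the constant ${\rm E}_0$ should scale as $\bar\rho^{-2}$ rather than $\bar\rho^{-1}$, which changes constants but not the rate in Theorem~\ref{thm:mkv}.
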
 

\begin{proof}
We begin by expanding the summation on the l.h.s.~of \eqref{eq:sum_lemma} and observing the following upper bound:
\beq
\begin{split}
\sum_{t=0}^T \sum_{s=0}^t \sum_{q=0}^t a_s a_q (1-\rho)^{2t-q-s} & \leq \sum_{s=0}^T \sum_{q=0}^s a_s a_q (1-\rho)^{-q-s}  \sum_{t=s}^T (1-\rho)^{2t} \\
& \quad + \sum_{q=0}^T \sum_{s=0}^q a_s a_q (1-\rho)^{-q-s}  \sum_{t=q}^T (1-\rho)^{2t} 
\end{split}
\eeq
As $\sum_{t=s}^T (1-\rho)^{2t} \leq \frac{(1-\rho)^{2s}}{\rho}$, we have
\beq
\begin{split}
& \sum_{s=0}^T \sum_{q=0}^s a_s a_q (1-\rho)^{-q-s}  \sum_{t=s}^T (1-\rho)^{2t} \leq \frac{1}{2\rho} \sum_{s=0}^T \sum_{q=0}^s \big\{ a_s^2 + a_q^2 \big\} (1-\rho)^{s-q}
\end{split}
\eeq
Observe that 
\beq \notag
\sum_{s=0}^T a_s^2 \sum_{q=0}^s (1-\rho)^{s-q} = \sum_{s=0}^T a_s^2 \sum_{q=0}^s (1-\rho)^{q} \leq \sum_{s=0}^T \frac{a_s^2}{\rho}
\eeq
\beq \notag
\sum_{s=0}^T \sum_{q=0}^s  a_q^2 (1-\rho)^{s-q} = \sum_{q=0}^T a_q^2 \sum_{s=q}^T (1-\rho)^{s-q} \leq \sum_{s=0}^T \frac{a_s^2}{\rho}
\eeq
This shows
\beq \notag
\begin{split}
& \sum_{s=0}^T \sum_{q=0}^s a_s a_q (1-\rho)^{-q-s}  \sum_{t=s}^T (1-\rho)^{2t} \leq \frac{1}{\rho^2} \sum_{s=0}^T a_s^2
\end{split}
\eeq
By symmetry, we have $\sum_{q=0}^T \sum_{s=0}^q a_s a_q (1-\rho)^{-q-s}  \sum_{t=q}^T (1-\rho)^{2t} \leq \frac{1}{\rho^2} \sum_{s=0}^T a_s^2$. Adding this two bounds yields the desired result in \eqref{eq:sum_lemma}.
\end{proof}


{
\bibliographystyle{ieeetr}
\bibliography{cdc_dsa}
}

\end{document}